\theoremstyle{plain}
\newtheorem{lemma}[]{Lemma}
\newtheorem{corollary}[]{Corollary}
\newtheorem{proposition}[]{Proposition}
\theoremstyle{definition}
\newtheorem{example}[]{Example}
\newcommand{\bmat}{\begin{bmatrix}}
\newcommand{\emat}{\end{bmatrix}}
\newcommand{\dpartial}[2]{\frac{\partial{#1}}{\partial{#2}}}
\newcommand{\dtotal}[2]{\tfrac{\mrm{d} #1}{\mrm{d} #2}}
\newcommand{\dtotaltfrac}[2]{\tfrac{\mrm{d} #1}{\mrm{d} #2}}
\DeclareMathOperator{\diag}{diag}
\DeclareMathOperator{\EXV}{\mathbb{E}}		% expected value
\DeclareMathOperator{\erf}{erf}		
\DeclareMathOperator{\conv}{conv}		
\DeclareMathOperator{\cov}{cov}		
\DeclareMathOperator{\mean}{m}		
\DeclareMathOperator{\prob}{Prob}
\newcommand{\R}{\mathbb{R}}
\newcommand{\SSS}{{\mathbb{S}}}
\newcommand{\calF}{\mathcal{F}}
\newcommand{\calN}{\mathcal{N}}
\newcommand{\calX}{\mathcal{X}}
\newcommand{\calZ}{\mathcal{Z}}
\newcommand{\bigO}{\mathcal{O}}
\newcommand{\smallO}{o}
\newcommand{\defeq}{:=}
\newcommand{\eqdef}{=:}
\newcommand{\eye}{I}  % identity matrix
\newcommand{\expe}{\mathrm{e}}
\newcommand{\mrm}{\mathrm}
\newlength{\dhatheight}
\newcommand{\doublehat}[1]{%
    \settoheight{\dhatheight}{\ensuremath{\hat{#1}}}%
    \addtolength{\dhatheight}{-0.35ex}%
    \hat{\vphantom{\rule{1pt}{\dhatheight}}%
    \smash{\hat{#1}}}}
\begin{document}
\begin{frontmatter}
\title{
Approximate propagation of normal distributions for stochastic optimal control of nonsmooth systems%
\tnoteref{t1}
}
\tnotetext[t1]{This research was supported by DFG via project 424107692 on Robust MPC and by the EU via ELO-X 953348.}

\author[1]{Florian Messerer}
\author[1]{Katrin Baumg{\"a}rtner}
\author[1]{Armin Nurkanovi{\'c}}
\author[1,2]{Moritz Diehl}

\affiliation[1]{
   organization={Department of Microsystems Engineering (IMTEK), University of Freiburg},
   addressline={Georges-Koehler-Allee 102}, 
   city={79110 Freiburg},
   country={Germany}
   }
\affiliation[2]{
   organization={Department of Mathematics, University of Freiburg},            
   city={79104 Freiburg},
   country={Germany}
   }

\begin{abstract}
We present a method for the approximate propagation of mean and covariance of a probability distribution through ordinary differential equations (ODE) with discontinuous right-hand side.
For piecewise affine systems, a normalization of the propagated probability distribution at every time step allows us to analytically compute the expectation integrals of the mean and covariance dynamics while explicitly taking into account the discontinuity.
This leads to a natural smoothing of the discontinuity such that for relevant levels of uncertainty the resulting ODE can be integrated directly with standard schemes and it is neither necessary to prespecify the switching sequence nor to use a switch detection method.
We then show how this result can be employed in the more general case of piecewise smooth functions based on a structure preserving linearization scheme.
The resulting dynamics can be straightforwardly used within standard formulations of stochastic optimal control problems with chance constraints.
\end{abstract}

\begin{keyword}
Nonsmooth dynamics \sep stochastic optimal control \sep numerical optimal control \sep uncertain initial value
%% keywords here, in the form: keyword \sep keyword

%% PACS codes here, in the form: \PACS code \sep code

%% MSC codes here, in the form: \MSC code \sep code
%% or \MSC[2008] code \sep code (2000 is the default)

\end{keyword}

\end{frontmatter}

%%%%%%%%%%%%%%%%%%%%%%%%%%%%%%%%%%%%%%%%%%%%%%%%%%%%%%%%%%%%%%%%%%%%%%%%%%%%%%%%
\section{Introduction}
Throughout this paper, we consider initial value problems (IVP) with uncertain initial value  $x_0\in\R^n$ which are defined by an ordinary differential equation (ODE) with discontinuous right-hand side.
More specifically, we consider IVP of the form
\begin{equation} \label{eq:disc_ode}
      x(0) = x_0, \quad
      x_0 \sim \calX_0, \quad
      \dot x = f(x)
      \quad\text{with}\quad
      f(x) \defeq \left \{ \begin{array}{rl} f_1(x), & \psi(x)< 0, \\ f_2(x), & \psi(x) > 0,  \end{array} \right.
   \end{equation}
for $t\in[0, T] \eqdef \mathbb{T}$ and with the initial uncertainty described by probability distribution $\calX_0$.
The right-hand side is defined by the smooth modes $f_1$, $f_2$ and the switching function $\psi$, such that depending on the sign of $\psi(x)$ the dynamics evolve according to $f_1$ or $f_2$.
The zero-level set of $\psi$ defines the switching surface as $\mathbb{S}_\psi = \{ x\in\R^n \mid \psi(x) = 0 \}$, and for regularity we assume $\nabla\psi(x)\neq 0$ for all $x \in \mathbb{S}_\psi$.
For rigorously treating the discontinuity, i.e., the case where $\psi(x) = 0$, we refer to the notion of Filippov differential inclusions \cite{Filippov1988}.
We only consider the nondegenerate cases where the solution to \eqref{eq:disc_ode} is well defined, as will be discussed below in more detail.
The main results will be derived for the case that $f_1$, $f_2$ and $\psi$ are affine, with an extension on how these results can be employed in the nonlinear case.
Since our primary interest here is the simulation of the dynamics we consider an uncontrolled system.
However, the results straightforwardly extend to the case where $f$ additionally depends on a control input.

Nonsmooth dynamics arise in the modeling of a wide range of systems, especially in mechanics and robotics, e.g., Coulomb friction, contact models, gear boxes, but also in electrical circuits \cite{Acary2008, Brogliato2016}.
Standard theory of numerical integration of ODE is built on the assumption of Lipschitz continuity \cite{Iserles2008}. Thus, special care needs to be taken when simulating nonsmooth dynamics, for which this assumption is violated \cite{Acary2008}.
Based on a dynamic system model, optimal control provides a systematic framework for achieving a desired system behavior, i.e., optimizing trajectories based on an objective function and subject to constraints.
Its closed loop application, model predictive control (MPC), relies on the numerical solution of optimal control problems (OCP) in real time \cite{Rawlings2017}.
Often this takes the form of solving nonlinear programs (NLP) via Newton-type, i.e., derivative based, methods \cite{Nocedal2006}.
Thus, when simulating system dynamics in this context, it is relevant that the integration schemes are both efficient and provide accurate sensitivities.

Since models typically do not allow for a perfect prediction of reality, there is always some uncertainty involved.
The closely related fields of robust and stochastic optimal control try to explicitly account for this mismatch \cite{Rawlings2017, Kouvaritakis2016, Mesbah2016, Rakovic2019}.
In the robust paradigm this takes the form of set based uncertainty models, whereas the stochastic approach is concerned with probability distributions.
Numerically the resulting formulations can be very similar: 
for example, a positive definite matrix can both describe the shape of an ellipsoidal set and the covariance of a normal distribution (giving rise to ellipsoidal confidence sets), cf., e.g. \cite{Feng2020, Zanelli2021}.
Thus, while working in a stochastic framework -- the results in this paper exploit the smoothly decaying unbounded support of normal distributions -- we will still draw from results in the robust optimization literature.

In this paper we are concerned with the behavior of probability distributions under nonsmooth dynamics.
In \cite{DiMarino2016}, the authors consider distributions under the dynamics of a Moreau sweeping process. In more detail, they consider probability densities with support on a convex bounded set which moves as a function of time. This leads to nonsmooth interactions at the set boundary as the distribution is pushed along, and the authors derive results on existence and uniqueness for the resulting evolution of the distribution and provide a discretization based approximation.
In \cite{Souaiby2023} the authors consider a similar setting with an additional drift term.
Based on a Lipschitz approximation of the nonsmooth dynamics, they describe the evolution of the distribution via the Liouville equation.
Leveraging an additional finite order moment approximation allows them to compute the distribution over time.
In \cite{Kirches2006}, the authors propose the linearization based propagation of an uncertainty set for an ODE with discontinuous right-hand side.
Based on detection of the switch they account for the change in integrator sensitivity resulting from the discontinuity via the so called jump matrix, cf. \cite{Bock1987, Stewart2010}. This allows them to solve an OCP with nonsmooth dynamics under parametric uncertainty.
For an overview of jump matrices in this context, see also \cite{Kong2023}.

\subsection{Contribution and outline}
We present a method for the approximate propagation of mean and covariance of a probability distribution through an ODE in the form of \eqref{eq:disc_ode}. 
The method is based on
(a) linearizing the right-hand side function at the current mean in terms of its components $f_1$, $f_2$, $\psi$, such that a piecewise affine function is obtained which preserves the discontinuous structure of $f$,
(b) approximating the current probability distribution by a normal distribution which for piecewise affine $f$ allows us to analytically compute the current change of mean and covariance. However, since this neglects the change of the higher-order moments, this does not lead to an exact propagation.
Finally, we demonstrate how the derived dynamics can be used in a stochastic optimal control problem formulation with chance constraints.

We start by discussing the relevant background on ODE with discontinuous right-hand side in Section~\ref{sec:ode_discont} and on IVP with uncertain initial value in Section~\ref{sec:ivp_unc}.
In Section~\ref{sec:switch_const_1D} we provide a detailed discussion of the simplified setting of a scalar piecewise constant system to further the intuitive understanding.
This is followed by the derivation of the main result for piecewise affine systems in Section~\ref{sec:switch_aff_nD}, the extension to the piecewise smooth case in Section~\ref{sec:pw_smooth}, and its application to stochastic OCP in Section~\ref{sec:stochOCP}, with a concluding Section~\ref{sec:conclusions}.

\subsection{Notation}
For a multivariate function $f\colon\R^n \to \R^m$, $x\mapsto f(x)$, the gradient is defined as the transpose of the Jacobian, $\nabla f(x) = \dpartial{f(x)}{x}^\top$, such that $\nabla f(x) \in \R^{n\times m}$.
For two vectors $x\in\R^n$, $y\in \R^m$ their vertical concatenation is denoted by $(x,y) \defeq [x^\top, y^\top]^\top$.
The convex hull of two vectors $x, y \in \R^{n}$ is $\conv(x, y) \defeq \{ (1-\theta) x + \theta y \mid \theta \in [0,1]  \}.$
If a symmetric matrix $S = A + A^\top$ is a sum of a matrix $A\in\R^{n\times n}$ and its transpose, we abbreviate this as $S = A + (\star)$.

\section{Ordinary differential equations with discontinuous right-hand side}
\label{sec:ode_discont}

In this section we briefly summarize the relevant background on ODE with discontinuous right-hand side of the form \eqref{eq:disc_ode}, to the extent relevant in the context of numerical optimal control.
For a more in-depth discussion we refer especially to \cite{Stewart2010}. 

\subsection{Nondegenerate switching cases}
We distinguish between two nondegenerate switching cases \cite{Stewart2010}, in which the solution trajectory is well defined:
\begin{enumerate}
   \item Crossing the discontinuity: $\psi(x) = 0$ and both $\nabla \psi(x)^\top f_1(x) > 0$ and $\nabla \psi(x)^\top f_2(x) > 0$.
   In consequence $\dtotal{\psi(x(t))}{t} > 0$ immediately before and after the switching time such that the state immediately leaves the switching surface after reaching it, and crosses from $\psi(x) < 0$ to $\psi(x) > 0$.
   The case where the surface is crossed in the opposite direction is analogous.
   \item Sliding mode (or trapped case):  $\psi(x) = 0$ and $\nabla \psi(x)^\top f_1(x) > 0$ but $\nabla \psi(x)^\top f_2(x) < 0$. 
   In this case, the solution will remain on the surface, i.e., $\dtotal{\psi(x(t))}{t} = 0$ after the switching time.
\end{enumerate}

\subsection{Numerical integration} \label{subsec:nonsmooth_numerical_int}
Because $f$ is discontinuous, the standard theory of ODE and their numerical integration does not hold since it is built on the assumption of (Lipschitz)-continuity of $f$ \cite{Iserles2008}.
Thus, when relying on standard integration schemes and their theory, the switch needs to be considered explicitly.
Otherwise the standard results on the order of integration accuracy will not hold: In general, for a step size $h$ the error will be $\bigO(h)$ irrespective of the order of the scheme \cite{Acary2008}. Furthermore, the error in the integration map sensitivities will even be independent of the step size \cite{Stewart2010, Nurkanovic2020}.
In the context of optimal control this requires either a predefined switching sequence or a switch detecting integration scheme which supplies correct sensitivities \cite{Acary2008, Stewart1990a,Nurkanovic2022}.
An intuitive and common workaround is to smoothen the right-hand side \eqref{eq:disc_ode}.
This results in the smooth approximate dynamics
\begin{equation} \label{eq:f_smoothed}
   f_\sigma(x) = (1 - \alpha_\sigma(\psi(x))) f_1(x) + \alpha_\sigma(\psi(x)) f_2(x)),
\end{equation}
where $\alpha_\sigma\colon\R\to\R$ is a smooth approximation of the Heaviside step function, parametrized by $\sigma > 0$ and with increasing accuracy as $\sigma \to 0$.
One choice is $\alpha_\sigma(\xi) = (1+\tanh(\xi / \sigma)) / 2$.
However, the resulting ODE will be increasingly nonlinear and stiff for decreasing $\sigma$ and require decreasingly smaller step sizes for sufficiently precise integration.
Furthermore, it can be shown that the integrator has to have a step size of $h=\smallO(\sigma)$ in order for its sensitivities to be adequate \cite{Stewart2010, Nurkanovic2020}, which is an important requirement if it is to be used as a component in the formulation of a nonlinear program (NLP).
Thus, one is in general well advised to carefully choose an appropriate integration scheme when handling an ODE with discontinuous right-hand side.

\subsection{Solution sensitivities}
While the solution maps of IVP with discontinuous right-hand side are continuous, their sensitivities may have jumps when encountering a switch.
Denote by $x(t; x_0)$, $t\in\mathbb{T}$, the solution of IVP \eqref{eq:disc_ode} for a given initial value $x_0$.
Assume the IVP is not initialized at a switch but that at some time $ 0 < t_\mathrm{s} < T$ with $x_\mathrm{s}\defeq x(t_\mathrm{s}; x_0)$ the solution reaches the switching surface, $\psi(x_\mathrm{s}) = 0$ and $\psi(x(t, x_0)) \neq 0$ for all $t\in[0,t_\mathrm{s})$.
Then the sensitivity of the final state with respect to the initial value is given by
\begin{equation}
   \dpartial{x(T; x_0)}{x_0} = \dpartial{x(T - t_\mathrm{s}; x_\mathrm{s})}{x_\mathrm{s}} J(x_\mathrm{s}) \dpartial{x(t_\mathrm{s}; x_0)}{x_0},
\end{equation}
where the jump matrix $J(x_\mathrm{s})$ accounts for the jump of sensitivity and is given by
\begin{equation} \label{eq:saltation_mat}
   J(x_\mathrm{s}) = \eye +  \frac{(f_2(x_\mathrm{s}) - f_1(x_\mathrm{s}))\nabla\psi(x_\mathrm{s})^\top}{ \nabla\psi(x_\mathrm{s})^\top f_1(x_\mathrm{s})   }
   \quad\mathrm{resp.}\quad
   J(x_\mathrm{s}) = \eye +  \frac{(f_2(x_\mathrm{s}) - f_1(x_\mathrm{s}))\nabla\psi(x_\mathrm{s})^\top}{ \nabla\psi(x_\mathrm{s})^\top( (f_1(x_\mathrm{s}) - f_2(x_\mathrm{s}) )   }
\end{equation}
for the crossing resp. sliding mode \cite{Stewart2010}.
Without loss of generality, the jump matrix for the crossing case is given under the assumption that $\psi(x_0) < 0$, i.e., the state crosses from mode $f_1$ into mode $f_2$.

\begin{example}[Crossing the discontinuity] \label{ex:crossing1D_nom}
   Consider a scalar system with state $x\in\R$ and $ \dot x = 3$ for $x < 0$ and $\dot x = 1$ for $x > 0$.
   We simulate the trajectory $x(t; x_0)$ for $t\in[0,T]$, and for three different values of the initial state $x_0$, given by the set $\{ \bar\mu_1, \bar\mu_1+3\sigma_1, \bar\mu_1-3\sigma_1\}$, with $\bar\mu_1 = -3$, $\sigma_1=0.3$, and $T=2$.
   The resulting trajectories as well as the corresponding integrator map $x(x_0, T)$ are shown in Fig.~\ref{fig:crossing1D_nominal}.
   We observe that the discontinuity in $\dot x$ at $x=0$ leads to a kink in the trajectories. 
   Due to this kink, the distance between the trajectories narrows.
   Whereas initially the distance between the two outer points is $6\sigma_1$, after each point has crossed the switch this distance has narrowed to $ \tfrac{\bar f_2}{\bar f_1}6\sigma_1 =  \tfrac{1}{3} 6\sigma_1$. The scaling factor of $\tfrac{1}{3}$ corresponds to the slope of the integrator map in the respective region and is given by the jump matrix \eqref{eq:saltation_mat}.
\end{example}

   \begin{figure}
      \centering
      \includegraphics[width=.49\textwidth]{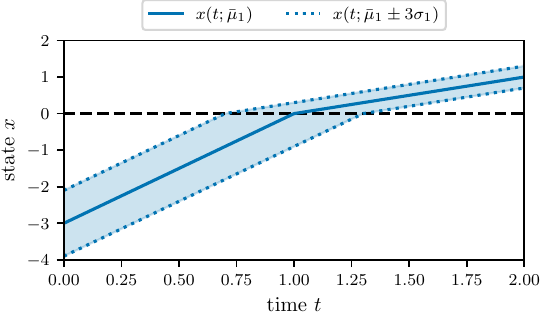}
      \includegraphics[width=.49\textwidth]{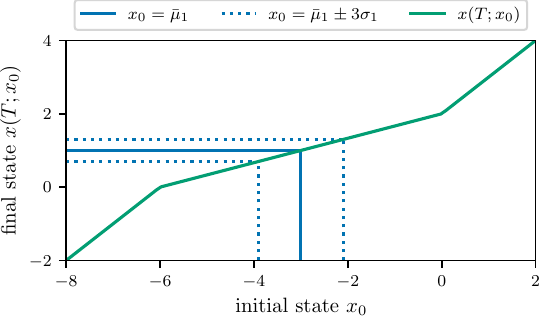}
      \caption{
         Crossing the discontinuity.
      Left: The state trajectories from Example~\ref{ex:crossing1D_nom}.
      The switch at $x=0$ leads to a kink in each trajectory and to a scaling of their distances with respect to each other.
      Right: The corresponding integrator map $x(T; x_0)$ as a function of $x_0$, which is piecewise affine.
      For $-6\leq x_0 \leq 0$, the discontinuity is crossed within the integration interval.
      The blue lines visualize the corresponding mapping of the initial states from the left-hand side plot. The scaling of the distances is a consequence of the slope of the integrator map in the corresponding region.
      }
      \label{fig:crossing1D_nominal}
   \end{figure}

\begin{example}[Sliding mode]\label{ex:sliding1D_nom}
   Now consider a system with state $x\in\R$ and $ \dot x = 3$ for $x < 0$ and $\dot x = -1$ for $x > 0$.
   This results in a system with sliding mode, i.e., once a trajectory reaches $x=0$, it stays there.
   Again, we simulate the trajectory $x(t; x_0)$ for $t\in[0,T]$, and for three different values of the initial state $x_0$, given by $\{ \bar\mu_1, \bar\mu_1+3\sigma_1, \bar\mu_1-3\sigma_1\}$, this time with $\bar\mu_1 = -1$, $\sigma_1=0.6$, and $T=1.2$.
   The resulting trajectories as well as the corresponding integrator map $x(x_0, T)$ are shown in Fig.~\ref{fig:sliding1D_nominal}.
   The sliding mode leads to a narrowing of the distance between the trajectories over time, until all of them have reached $x=0$.
   This results in the zero slope of the integrator map $x(T; x_0)$ in the region of initial states $x_0$ such that $x(T; x_0) = 0$.
   Note that this time the zero sensitivity is not a consequence of the jump matrix \eqref{eq:saltation_mat}, but of the resulting sliding mode dynamics $\dot x = 0$ if $x=0$.
   This is due to the scalar state space.
   In a higher dimensional state space, the sensitivity would not necessarily be zero, since the state could still evolve on the switching surface with nontrivial dynamics.
\end{example}
\begin{figure}
   \centering
   \includegraphics[width=.49\textwidth]{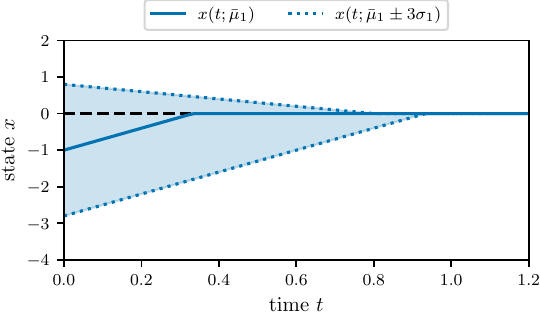}
   \includegraphics[width=.49\textwidth]{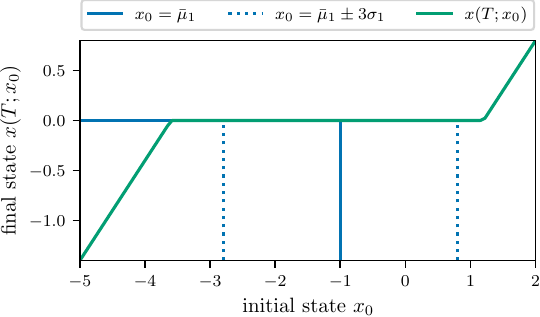}
   \caption{
   Sliding mode.   
   Left: The state trajectories from Example~\ref{ex:sliding1D_nom}.
   Once a trajectory reaches $x=0$ it stays there, which leads to narrowing of the distances over time.
   Right: The corresponding integrator map $x(T; x_0)$ as a function of $x_0$, which is piecewise affine.
   The flat region in the center corresponds to the values of $x_0$ such that $x(T, x_0)$ reaches the switching surface.
   The blue lines visualize the corresponding mapping of the initial states from the left-hand side plot.
   }
   \label{fig:sliding1D_nominal}
\end{figure}

\section{Initial value problems with uncertain initial value}
\label{sec:ivp_unc}

We now consider again a general IVP with $x(0)=x_0$, $\dot x = f(x)$, $t\in \mathbb{T}$, and assume its solution is well defined for every $x_0\in\R^{n}$. However, the initial state is not exactly known.
Instead it follows a probability distribution, $x_0 \sim \calX_0$.
If we denote by $x(t; x_0)$ the solution to the IVP after the time interval $[0, t]$ given $x(0)=x_0$, this induces a distribution $\calX(t)$ over the solution trajectory such that $x(t; x_0)\sim\calX(t)$.
Alternatively we can view this as an IVP in distribution space, $\calX(0)=\calX_0$, $\dot \calX = \calF(\calX)$, with appropriately defined $\calF$.

In principle, we can describe this evolution in terms of the moments of $\calX$, assuming that all moments are finite and uniquely determine $\calX$.
This holds if $\calX$ has bounded support or if its tails decay sufficiently fast, which includes normal distributions \cite{Billingsley1995}.
In particular, consider the first and second-order moments, mean and covariance, defined as
\begin{subequations}
   \begin{align}
      \mean_\calX &\defeq \EXV_{x\sim \calX}\{ x \}  \label{eq:def_mean}
      \\ 
      \cov_\calX &\defeq \EXV_{x\sim \calX}\{ (x-\mean_\calX)(x-\mean_\calX)^\top \}
      \label{eq:def_cov}
   \end{align}
\end{subequations}
Noting that $\EXV_{x\sim \calX(t)}\{ x \} =  \EXV_{x_0\sim \calX_0}\{ x(t; x_0) \}$, we get the time derivative of the mean as
\begin{subequations} \label{eq:moment_dot}
\begin{equation} 
   \dtotaltfrac{}{t}\mean_{\calX(t)} = \dtotaltfrac{}{t}  \EXV_{x_0\sim \calX_0} \{ x(t; x_0) \}
   = \EXV_{x_0\sim \calX_0}\{ \dtotaltfrac{}{t} x(t; x_0) \}
   = \EXV_{ x \sim \calX(t)}\{ f(x) \}. \label{eq:mu_dot}
\end{equation}
   Similarly, we get the time derivative of the covariance as
   \begin{align}
   \dtotaltfrac{}{t} \cov_{\calX} &= \dtotaltfrac{}{t}  \EXV_{x\sim \calX}\{ (x-\mean_\calX)(x-\mean_\calX)^\top \} \\
  &=
   \EXV_{x\sim \calX}\{ (f(x)-\dtotaltfrac{}{t} \mean_\calX)(x-\mean_\calX)^\top + (\star)  \} \\
  &=  \EXV_{x\sim \calX}\{ f(x)(x-\mean_\calX)^\top  \}+ (\star). \label{eq:Sigdot_2}
\end{align}
\end{subequations}
However, to exactly describe the evolution of $\calX$, we would need to consider the change of all moments up to infinite order, which is in general intractable.
Further, we cannot treat the expectation of a nonlinear transformation of a random variable in this general setting.

\subsection{Linearization-based uncertainty propagation for smooth dynamics}
For IVP with smooth dynamics (or in general for smooth nonlinear transformations of random variables) a standard approach to approximate the uncertainty propagation is based on linearization.
Examples include widely used methods such as the Extended Kalman Filter \cite{Stengel1986}.
There are two major variants: (i) linearization of the right-hand side of the ODE, and (ii) linearization of the integration map.

In the first variant, after substituting $f$ by its first-order Taylor approximation at the mean, the expectations in \eqref{eq:moment_dot} can be analytically computed as
\begin{subequations} \label{eq:moment_dot_taylor}
   \begin{align} 
      \dtotal{}{t}\mean_\calX &\approx \EXV_{ x \sim \calX}\{ f(\mean_\calX) + \dpartial{f(\mean_\calX)}{\mean_\calX}(x-\mean_\calX) \} = f(\mean_\calX),\\
      \dtotal{}{t} \cov_\calX &\approx \EXV_{x\sim \calX}\{ \big(f(\mean_\calX) + \dpartial{f(\mean_\calX)}{\mean_\calX}(x-\mean_\calX)\big)(x-\mean_\calX)^\top   \} + (\star) = \dpartial{f(\mean_\calX)}{\mean_\calX} \cov_\calX + (\star). 
      \label{eq:moment_dot_taylor_cov}
   \end{align}
\end{subequations}
When computing the expectation in \eqref{eq:moment_dot_taylor_cov}, the first term vanishes due to $\EXV_{x\sim\calX} \{ x - \mean_\calX \} = 0$, cf. \eqref{eq:def_mean}.
The second term yields the covariance \eqref{eq:def_cov} premultiplied by a constant (with respect to the expectation operator).
The approximation in \eqref{eq:moment_dot_taylor} is exact for the case that $f$ is an affine function.
The resulting approximate propagation of the first two moments is defined by the IVP
\begin{subequations} \label{eq:musigdot_standard}
   \begin{align}
      \mu(0) &= \mean_{\calX_0}, & \dot \mu &= f(\mu), \\
      \Sigma(0) &= \cov_{\calX_0}, & \dot\Sigma &= \dpartial{f(\mu)}{\mu} \Sigma + \Sigma \dpartial{f(\mu)}{\mu}^\top.
   \end{align}
\end{subequations}
Unsurprisingly, the covariance dynamics are in the form of the continuous time differential Lyapunov equation \cite{Soederstroem2002}, i.e., the covariance dynamics of a linear system.

While in the preceding approach the uncertainty propagation is given in continuous time, in direct optimal control the dynamics are usually considered only on a discrete time grid.
Thus an alternative approach, corresponding to the second variant, is to discretize the dynamics first and only then consider uncertainty.
Let $f_h(x_k) \defeq x(h; x_k)$ denote the integration of the dynamics over the discretization time step $h$ given the initial value $x_k$, such that $x_{k+1} = f_h(x_k)$.
Note that in practice, this integration map is typically approximated by standard numerical integration schemes, cf., e.g., \cite{Rawlings2017}.
Then, given $x_k \sim \calX_k$, we can approximate the expectations as
\begin{subequations} 
   \begin{align} 
      \mean_{\calX_{k+1}} & = \EXV_{ x \sim \calX_k}\{ f_h(x) \} 
      \\&\approx \EXV_{ x \sim \calX_k}\{ f_h(\mean_{\calX_k}) + \dpartial{f_h(\mean_{\calX_k})}{\mean_{\calX_k}}(x-\mean_{\calX_k}) \} = f_h(\mean_{\calX_k}),\\
       \cov_{\calX_{k+1}} &=
       \EXV_{x\sim \calX_k}\{ (f_h(x) - \mean_{\calX_{k+1}})(f_h(x) - \mean_{\calX_{k+1}})^\top  \}\\
       &\approx \EXV_{x\sim \calX_k}\{ \dpartial{f_h(\mean_{\calX_k})}{\mean_{\calX_k}}(x-\mean_{\calX_k})(x-\mean_{\calX_k})^\top\dpartial{f_h(\mean_{\calX_k})}{\mean_{\calX_k}}^\top \} = \dpartial{f_h(\mean_{\calX_k})}{\mean_{\calX_k}} \cov_{\calX_k} \dpartial{f_h(\mean_{\calX_k})}{\mean_{\calX_k}}^\top.
   \end{align}
\end{subequations}
This results in the approximate propagation 
\begin{subequations} \label{eq:musigplus_standard}
   \begin{align}
      \mu_0 &= \mean_{\calX_0}, & \mu_{k+1}&= f_h(\mu_k), \\
      \Sigma_0 &= \cov_{\calX_0}, & \Sigma_{k+1} &= \dpartial{f_h(\mu_k)}{\mu_k} \Sigma_k \dpartial{f_h(\mu_k)}{\mu_k}^\top,
   \end{align}
\end{subequations}
with the covariance dynamic corresponding to the discrete time  Lyapunov difference equation.
Thus, it is sufficient to obtain a (numerically approximated) map for the deterministic IVP, from which the covariance dynamics directly follow.
This is in contrast to \eqref{eq:musigdot_standard}, where both mean and covariance need to be numerically integrated.
One important practical consideration is that during the numerical integration of \eqref{eq:musigdot_standard} it can happen that $\Sigma$ takes indefinite values, while the propagation in \eqref{eq:musigplus_standard} guarantees that $\Sigma_k \succeq 0$ if $\Sigma_0 \succeq 0$, cf. \cite{Gillis2013}.

\subsection{Linearization-based uncertainty propagation for nonsmooth dynamics}
Both of the approaches discussed in the previous subsection rely on linearization of a smooth function. Thus, they are not directly applicable to ODE with a discontinuous right-hand side.
Again, an intuitive approach is to smoothen the dynamics as in \eqref{eq:f_smoothed} and then apply a linearization based scheme.
This does not come without issues: all the problems regarding stiffness and accuracy of derivatives from the nominal case will transfer such that it is challenging to use within an optimization problem, cf. Section~\ref{subsec:nonsmooth_numerical_int}.
Thus, as in the nominal case, an alternative is to treat the switch explicitly and propagate the covariance based on \eqref{eq:musigplus_standard}.
Using the jump matrix \eqref{eq:saltation_mat} the sensitivity jump can be handled explicitly.
However, in the context of optimal control this requires either a method for switch detection \cite{Kirches2006} or a predefined switching sequence.

Furthermore, the linearization based propagation of mean and covariance leans on the assumption that the nonlinear dynamics are sufficiently well approximated by a linearization at the mean within the region of uncertainty. 
For a switched system this is clearly not the case if the mean is on one side of the switch but a nonnegligible amount of probability mass on the other.
While this works for some situations, in others it can cause a complete failure of the uncertainty propagation as we will see in a later example.

\subsection{Normalization-based uncertainty propagation}

In the remainder of this paper we will derive an alternative method for the approximate propagation of mean and covariance.
Instead of linearization, the method is based on ``normalization'' of the probability distribution, i.e., at each point in time we approximate the true distribution by a normal distribution that is defined by our current value for mean and variance.
For switched affine systems this will allow us to compute the expectations in 
\eqref{eq:moment_dot} analytically, resulting in a natural smoothing of the discontinuity.
This yields an easy to implement ODE for mean and variance that can be treated by standard integrators and be straightforwardly used within tractable stochastic OCP formulations.

A similar idea of renormalization after every time step (although in a discrete time setting) is used in the method of moment matching for recursive time series prediction based on Gaussian process models: for these, the mean and covariance of the output can be exactly computed given a normal distribution in the input variable \cite{Quinonero-Candela2003}.
Further, the unscented Kalman filter \cite{Julier2004} discretizes the current normal distribution into systematically chosen samples which are then propagated through the nonlinear function. Based on mean and variance of the propagated samples, a new normal distribution is obtained.

While we will explain our suggested approach in detail in the following three sections, we already derive some results that will be useful later on.

Consider a variable $z\in\R^n$, two distributions $\calZ_1, \calZ_2$, and a function $g:\R^n \to \R^m$.
We would like to approximate $\EXV_{z\sim\calZ_1}\{g(z)\}$ by computing instead
the expectation with respect to $\calZ_2$. 
We can write
\begin{subequations} \label{eq:error_def}
\begin{equation} \label{eq:error_def_mean}
   \EXV_{z\sim\calZ_1} \{ g(z) \} =
   \EXV_{z\sim\calZ_2} \{ g(z) \} + 
   \underbrace{\EXV_{z\sim\calZ_1} \{ g(z) \} - \EXV_{z\sim\calZ_2} \{ g(z) \}}_{\displaystyle \eqdef \delta_g(\calZ_1, \calZ_2)},
\end{equation}
where $\delta_g(\calZ_1, \calZ_2)$ defines the resulting error.
Clearly, if the distributions are identical, this error is zero.
Intuitively, the error becomes larger as the two distribution become more dissimilar.
In the remainder of this paper we will use this definition to describe the error resulting from our approximation, but  provide no rigorous analysis.
However, we point out that the definition of the error is closely related to integral probability metrics \cite{Mueller1997}.
For example, in case that $g$ is Lipschitz continuous,  $\delta_g(\calZ_1, \calZ_2)$ can be bounded via the dual representation of the 1-Wasserstein (Kantorovich-Rubinstein) distance of $\calZ_1$ and $\calZ_2$, cf. \cite[Remark 6.5]{Villani2009}.
Similar bounds may be derived for the case that $g$ is piecewise Lipschitz, based on conditional expectations.
Additionally, we define
\begin{align}  \label{eq:error_def_cov_half}
   \Delta^\prime_g(\calZ_1, \calZ_2) &\defeq  \EXV_{z\sim\calZ_1} \{ g(z)(z-\mean_{\calZ_1})^\top \} - \EXV_{z\sim\calZ_2} \{ g(z)(z-\mean_{\calZ_2})^\top\}
   \\ \label{eq:error_def_cov}
   \Delta_g(\calZ_1, \calZ_2) &\defeq \Delta^\prime_g(\calZ_1, \calZ_2) + \Delta^\prime_g(\calZ_1, \calZ_2)^\top
\end{align}
\end{subequations}
with a similar motivation.

Now consider again the state distribution $\calX(t)$ as defined from the IVP with uncertain initial value  $x(0)=x_0$, $x_0\sim\calX_0$, $\dot x = f(x)$, $t\in \mathbb{T}$.
Both computing and representing $\calX(t)$ is in general intractable, and we would like to approximate it by a normal distribution $\calX(t) \approx \calN(\mu(t), \Sigma(t))$, parameterized by $\mu$ and $\Sigma$, and with its probability density function (PDF) given by
\begin{equation}
  \tilde\phi(x; \mu, \Sigma) = \frac{1}{\sqrt{(2\pi)^n\det\Sigma}} \expe^{-\frac{1}{2} (x-\mu)^\top \Sigma^{-1} (x-\mu)}.
\end{equation}
Our aim is to derive a tractable IVP for the parameters $\mu$ and $\Sigma$,
\begin{subequations}
   \begin{alignat}{6}
      \mu(0) &= \bar\mu_0,&\quad
      \dot \mu &= \hat f_\mu(\mu, \Sigma),
      \\
      \Sigma(0) &= \bar\Sigma_0,&\quad
      \dot \Sigma &= \hat f_\Sigma(\mu, \Sigma),
   \end{alignat}
\end{subequations}
such that $\calX(t) \approx \calN(\mu(t), \Sigma(t))$.

As a first step, we consider what happens when we replace the expectation with respect to $\calX$ by an expectation with respect to $\calN(\mu,\Sigma)$ in the moment dynamics \eqref{eq:moment_dot}.
\begin{lemma} \label{lem:error_from_wrong_expectation}
   Consider a distribution in state space, $x\sim\calX$, that evolves according to $\dot x = f(x)$.
   Consider also a normal distribution on the same space, $\calN(\mu,\Sigma)$.
   Then, the time derivative of mean and covariance can be written as
   \begin{subequations} \label{eq:mom_dot_2} 
   \begin{alignat}{6} 
      \dtotal{}{t}\mean_{\calX} &=  \EXV_{ x \sim \calN(\mu, \Sigma)}\{f(x) \} && + \delta_f(\calX, \calN(\mu,\Sigma)) , \label{eq:mu_dot_2}
      \\ \label{eq:sig_dot_2}
      \dtotal{}{t}\cov_{\calX} &= \EXV_{x\sim\calN(\mu,\Sigma)}\{ f(x)(x-\mu)^\top + (x-\mu)f(x)^\top \} && + \Delta_f(\calX, \calN(\mu,\Sigma)),
   \end{alignat}
   \end{subequations}
   with the error terms  $\delta_f$, $\Delta_f$ defined in \eqref{eq:error_def}.
\end{lemma}
\begin{proof}
   The mean dynamics $\dtotal{}{t} \mean_{\calX}$ we have from \eqref{eq:mu_dot}. Exchanging the expectation over $\calX$ by the expectation over $\calN(\mu,\Sigma)$ as in \eqref{eq:error_def_mean} results in \eqref{eq:mu_dot_2}.

   From \eqref{eq:Sigdot_2} we have the covariance dynamics as
   \begin{equation} \label{eq:covdot_proof_useful_lemma}
         \dtotal{}{t} \cov_{\calX} =  
         \EXV_{x\sim \calX}\{ f(x)(x-\mean_{\calX})^\top\} + (\star).
   \end{equation}
   We rearrange the first term of the above right-hand side as
   \begin{subequations}
      \begin{align}
         &\EXV_{x\sim\calX}\{ f(x)(x-\mean_\calX)^\top \}
         \\&\quad
         =\phantom{-}
         \EXV_{x\sim\calN(\mu,\Sigma)}\{ f(x)(x-\mean_\calX +\mu-\mu)^\top \} + \EXV_{x\sim\calX}\{ f(x)(x-\mean_\calX)^\top \}
         \\&\quad\phantom{=}\;-
         \EXV_{x\sim\calN(\mu,\Sigma)}\{ f(x)(x-\mean_\calX)^\top \}
         \\&\quad=
         \EXV_{x\sim\calN(\mu,\Sigma)}\{ f(x)(x-\mu)^\top \} + \EXV_{x\sim\calX}\{ f(x)(x-\mean_\calX)^\top \} -  \EXV_{x\sim\calN(\mu,\Sigma)}\{ f(x)(x-\mu)^\top \}
         \\&\quad=
         \EXV_{x\sim\calN(\mu,\Sigma)}\{ f(x)(x-\mu)^\top \} + \Delta^\prime_f(\calX, \calN(\mu,\Sigma)),
         \end{align}
   \end{subequations}
   where in the first step we add and subtract the expectation over $\calN(\mu, \Sigma)$ as in \eqref{eq:error_def_mean} and also add and subtract  $\mu^\top$.
   In the second step we cancel  $\EXV_{x\sim\calN(\mu,\Sigma)}\{ f(x)\mean_\calX^\top \} - \EXV_{x\sim\calN(\mu,\Sigma)}\{ f(x)\mean_\calX^\top \}$,
   and in the third step we use the definition of $\Delta^\prime_f(\calX, \calN(\mu,\Sigma))$ from \eqref{eq:error_def_cov_half}.
   Repeating the reformulation for the second term in \eqref{eq:covdot_proof_useful_lemma}, which is the transpose of the first, and substituting the definition of $\Delta_f$ from \eqref{eq:error_def_cov} yields \eqref{eq:sig_dot_2}.
\end{proof}

We obtain a differential equation for $\mu$ and $\Sigma$ by disregarding the error terms in \eqref{eq:mom_dot_2}.
This results in
\begin{subequations} \label{eq:ode_mu_sig}
   \begin{alignat}{6}
      \dot \mu &= \hat f_\mu(\mu, \Sigma) &
      \quad\text{with}\quad
      \hat f_\mu(\mu, \Sigma) &\defeq \EXV_{ x \sim \calN(\mu, \Sigma)}\{f(x) \},
      \\ \label{eq:ode_mu_sig_sig}
      \dot \Sigma &= \hat f_\Sigma(\mu, \Sigma) & 
      \quad\text{with}\quad
      \hat f_\Sigma(\mu, \Sigma) &\defeq  \EXV_{x\sim  \calN(\mu, \Sigma)}\{ f(x) (x-\mu)^\top\} + (\star).
   \end{alignat}
\end{subequations}

In general, \eqref{eq:ode_mu_sig} is still intractable due to the expectation over a nonlinear function.
However, for the piecewise constant and piecewise affine systems treated in this paper, we can compute this expectation analytically, as will be derived in the following two sections.
For this purpose, the following lemma will be useful. 
It shows that we get a closed-form expression of the variance dynamics $\dot \Sigma$ for free if we can find a closed-form expression of the mean dynamics  $\dot \mu$.

\begin{lemma} \label{lem:Sig_dot_lazy}
   Consider $\hat f_\mu(\mu, \Sigma)$ and $\hat f_\Sigma(\mu, \Sigma)$ as defined in \eqref{eq:ode_mu_sig}.
   The following holds:
   \begin{equation} \label{eq:Sig_dot_lazy}
      \hat f_\Sigma(\mu, \Sigma)  = \dpartial{\hat f_\mu(\mu, \Sigma)}{\mu} \Sigma + \Sigma\dpartial{\hat f_\mu(\mu, \Sigma)}{\mu}^\top.
   \end{equation}
\end{lemma}
\begin{proof}
   From \eqref{eq:ode_mu_sig_sig} we have  $\hat f_\Sigma(\mu, \Sigma) = \EXV_{x\sim  \calN(\mu, \Sigma)}\{ f(x) (x-\mu)^\top\} + (\star)$.
   We rearrange the first term as
\begin{subequations} 
   \begin{align} 
      \EXV_{x\sim  \calN(\mu, \Sigma)}\{ f(x) (x-\mu)^\top\}  
      &= \int_{\R^n} f(x) (x-\mu)^\top\tilde\phi(x; \mu,\Sigma) \Sigma^{-1} \Sigma \mathrm{d}x\\
      &= \left( \int_{\R^n} f(x)\tilde\phi(x; \mu,\Sigma)(x-\mu)^\top \Sigma^{-1} \mathrm{d}x \right) \Sigma  \label{eq:before_partial_deriv}\\
      &= \left(\int_{\R^n} f(x)\left(\dpartial{}{\mu}\tilde\phi(x; \mu,\Sigma)\right)  \mathrm{d}x \right) \Sigma  \label{eq:after_partial_deriv}\\
      &= \left(\dpartial{}{\mu} \int_{\R^n} f(x)\tilde\phi(x; \mu,\Sigma) \mathrm{d}x \, \right) \Sigma \\
      &= \left(\dpartial{}{\mu}  \EXV_{x\sim  \calN(\mu, \Sigma)} \{f(x) \}\right) \Sigma \\
      &=  \dpartial{\hat f_\mu(\mu, \Sigma)}{\mu} \Sigma,
   \end{align}
   \end{subequations}
where from \eqref{eq:before_partial_deriv} to  \eqref{eq:after_partial_deriv} we used
$
   \dpartial{}{\mu}\tilde\phi(x; \mu,\Sigma) =\tilde\phi(x; \mu,\Sigma) (x-\mu)^\top \Sigma^{-1}
$.
Noting that the second term in \eqref{eq:ode_mu_sig_sig}, as indicated by the $(\star)$, is simply the transpose of the first, results in \eqref{eq:Sig_dot_lazy}.
\end{proof}

We conclude this section with a small lemma which we will use several times throughout the paper. Before stating the lemma, we define the probability density function of the univariate standard normal distribution as
   \begin{equation} \label{eq:standard_normal_pdf_cdf}
      \phi(\nu) \defeq \tilde\phi(\nu; 0, 1) = \tfrac{1}{\sqrt{2\pi}}\expe^{-\frac{1}{2}\nu^2}, \quad
      \Phi(\nu) = \int_{-\infty}^\nu \phi(\nu') \mathrm{d}\nu',
   \end{equation}
   with associated cumulative distribution function (CDF) $\Phi$.
While there exists no closed-form expression for $\Phi$, we can in practice treat it as such since numerical implementations are readily available via the error function $\erf(\cdot)$.

\begin{lemma} \label{lem:int_aff_norm}
   Let $\alpha, \beta, \xi, \bar\xi, \mu, \sigma \in\R$, $\sigma > 0$.
   Then
\begin{subequations}
\begin{align}
          \label{eq:int_aff_norm_1}
            \int_{-\infty}^{\bar \xi} (\alpha \xi + \beta ) \tilde\phi(\xi; \mu, \sigma^2) \mathrm{d}\xi
            &=
            -\alpha \sigma^2 \tfrac{1}{\sigma} \phi(\tfrac{\bar \xi - \mu}{\sigma}) + (\alpha \mu + \beta)\Phi(\tfrac{\bar \xi - \mu}{\sigma})  \\
      \label{eq:int_aff_norm_2}
         \int_{\bar \xi}^\infty (\alpha \xi + \beta ) \tilde\phi(\xi; \mu, \sigma^2) \mathrm{d}\xi
            &= \phantom{-}
            \alpha  \sigma^2 \tfrac{1}{\sigma}  \phi(\tfrac{\bar \xi - \mu}{\sigma}) + (\alpha \mu + \beta)(1 - \Phi(\tfrac{\bar \xi - \mu}{\sigma}))
\end{align}
\end{subequations}
\end{lemma}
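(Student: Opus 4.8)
The plan is to reduce both identities to the standard normal distribution via the affine substitution $\nu = (\xi-\mu)/\sigma$ and then to exploit the elementary identity $\phi'(\nu) = -\nu\,\phi(\nu)$, which follows directly from the definition of $\phi$.

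First I would prove \eqref{eq:int_aff_norm_1}. Substituting $\xi = \sigma\nu + \mu$, so that $\mathrm{d}\xi = \sigma\,\mathrm{d}\nu$ and, since the Jacobian $\sigma$ cancels the factor $\tfrac{1}{\sigma}$ hidden in $\tilde\phi$, the measure transforms cleanly as $\tilde\phi(\xi;\mu,\sigma^2)\,\mathrm{d}\xi = \phi(\nu)\,\mathrm{d}\nu$, with the upper limit becoming $\bar\nu \defeq (\bar\xi-\mu)/\sigma$. The left-hand side then becomes $\int_{-\infty}^{\bar\nu}\big(\alpha\sigma\nu + \alpha\mu + \beta\big)\phi(\nu)\,\mathrm{d}\nu = \alpha\sigma\int_{-\infty}^{\bar\nu}\nu\,\phi(\nu)\,\mathrm{d}\nu + (\alpha\mu+\beta)\int_{-\infty}^{\bar\nu}\phi(\nu)\,\mathrm{d}\nu$. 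The second integral equals $\Phi(\bar\nu)$ by definition of the CDF. For the first, writing $\nu\,\phi(\nu) = -\phi'(\nu)$ gives $\int_{-\infty}^{\bar\nu}\nu\,\phi(\nu)\,\mathrm{d}\nu = -\big[\phi(\nu)\big]_{-\infty}^{\bar\nu} = -\phi(\bar\nu)$, using that $\phi$ vanishes at $-\infty$. Collecting terms and rewriting $\sigma = \sigma^2\,\tfrac{1}{\sigma}$ yields exactly the claimed right-hand side.

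For \eqref{eq:int_aff_norm_2} I would avoid repeating the computation and instead use the total integral $\int_{-\infty}^\infty (\alpha\xi+\beta)\,\tilde\phi(\xi;\mu,\sigma^2)\,\mathrm{d}\xi = \alpha\mu+\beta$ — i.e. the mean of an affine transform of a normal — and subtract the result of \eqref{eq:int_aff_norm_1}: $\int_{\bar\xi}^\infty (\alpha\xi+\beta)\,\tilde\phi\,\mathrm{d}\xi = (\alpha\mu+\beta) - \big(-\alpha\sigma^2\tfrac{1}{\sigma}\phi(\bar\nu) + (\alpha\mu+\beta)\Phi(\bar\nu)\big) = \alpha\sigma^2\tfrac{1}{\sigma}\phi(\bar\nu) + (\alpha\mu+\beta)\big(1-\Phi(\bar\nu)\big)$, which is the stated expression. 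Alternatively one could redo the substitution argument directly on the tail, using $\int_{\bar\nu}^\infty \nu\,\phi(\nu)\,\mathrm{d}\nu = \phi(\bar\nu)$ and $\int_{\bar\nu}^\infty \phi(\nu)\,\mathrm{d}\nu = 1-\Phi(\bar\nu)$.

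There is no serious obstacle here; the only points requiring care are the sign coming from $\phi' = -\nu\,\phi$, the bookkeeping in the affine substitution (in particular that the Jacobian $\sigma$ exactly absorbs the normalization constant of $\tilde\phi$), and the vanishing of $\phi$ at $\pm\infty$ which justifies evaluating the antiderivative at the improper limits. The hypothesis $\sigma>0$ is used precisely to make the substitution well defined and orientation-preserving.
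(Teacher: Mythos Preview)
Your proof is correct and follows essentially the same route as the paper: the same affine substitution $\xi=\sigma\nu+\mu$, the same split into the $\nu\phi(\nu)$ and $\phi(\nu)$ pieces, and the same use of $\phi'=-\nu\phi$. The only cosmetic difference is that for \eqref{eq:int_aff_norm_2} the paper simply says the derivation is analogous, whereas you obtain it by subtracting \eqref{eq:int_aff_norm_1} from the full-line integral; both are fine.
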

\begin{proof}
   For \eqref{eq:int_aff_norm_1} we have
   \begin{subequations}
    \begin{align}
      \int_{-\infty}^{\bar \xi} (\alpha \xi + \beta ) \tilde\phi(\xi; \mu, \sigma^2) \mathrm{d}\xi 
      &=  \int_{-\infty}^{\frac{\bar \xi - \mu}{\sigma}} (\alpha (\sigma\nu + \mu) + \beta ) \phi(\nu) \mathrm{d}\nu
      \\
      &=  \alpha \sigma\int_{-\infty}^{\frac{\bar \xi - \mu}{\sigma}} \nu  \phi(\nu) \mathrm{d}\nu
      +( \alpha \mu + \beta)\int_{-\infty}^{\frac{\bar \xi - \mu}{\sigma}}  \phi(\nu) \mathrm{d}\nu
      \\
      &=
      -\alpha \sigma^2 \tfrac{1}{\sigma} \phi(\tfrac{\bar \xi - \mu}{\sigma}) + (\alpha \mu + \beta)\Phi(\tfrac{\bar \xi - \mu}{\sigma}),
   \end{align}
\end{subequations}
where in the first line we substituted $\xi = \sigma \nu + \mu$.
For \eqref{eq:int_aff_norm_2} the derivation is similar.
Note that we do not simplify $\sigma^2 \tfrac{1}{\sigma}$ here to emphasize the appearance of $\tfrac{1}{\sigma} \phi(\tfrac{\bar \xi - \mu}{\sigma}) = \tilde\phi(\bar\xi; \mu, \sigma^2)$.
\end{proof}

\section{Uncertainty propagation for scalar piecewise constant systems}
\label{sec:switch_const_1D}

In order to get an intuitive understanding of how a normal distribution behaves when encountering a discontinuity, we will first limit our analysis to a scalar state space $x\in\R$ with piecewise constant dynamics $\dot x =f(x)$ of the form
\begin{equation} \label{eq:pw_const_1D}
      f(x) = \left \{ \begin{array}{rl} \bar f_1, & x < 0, \\\bar f_2, & x > 0,  \end{array} \right.
\end{equation}
with $\bar f_1, \bar f_2 \in \R\setminus\{0\}$.
If $\bar f_1$ and $\bar f_2$ have the same sign, this leads to a crossing of the discontinuity, whereas $\bar f_1 > 0$ and $\bar f_2 < 0 $ leads to the sliding mode.
In the following we will take a detailed look at how normal distributions behave when propagated through such a system: first for an example with a crossing of the discontinuity, then for an example with a sliding mode.
This is then followed by an approximate approach for propagating its first two moments.

\subsection{Case study: the switched normal distribution }
We start by revisiting Example~\ref{ex:crossing1D_nom}.
   \begin{figure}
      \centering
      \includegraphics[width=\columnwidth]{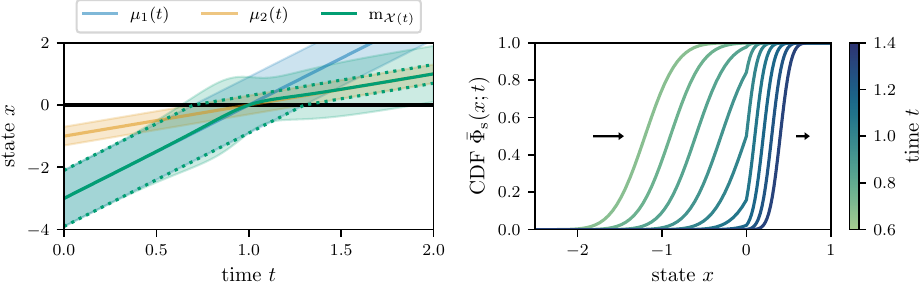}
      \caption{
         Left: The means of the two imagined normal distributions $\calN(\mu_i(t), \sigma_i^2)$, $i=1,2$, compared to the mean of the exactly propagated distribution. The shaded regions indicate $3\sigma$ on each side of the mean. The dotted lines indicate the 99.7\% probability mass corresponding to the original $\pm3\sigma$ region.
         Right: The cumulative density function  $\bar\Phi_\mathrm{s}(x; \mu_1(t), \mu_2(t), \sigma_1, \sigma_2)$ of the switched normal distribution for various time points as the distribution crosses the switch at $x=0$.
         The arrows indicate the state dynamics.
         }
      \label{fig:crossing1D}
   \end{figure}
\begin{example}[Crossing the discontinuity (cont.)] \label{ex:crossing}
   Consider again the system from Example~\ref{ex:crossing1D_nom}, i.e., dynamics of the form \eqref{eq:pw_const_1D} with $\bar f_1 = 3$ and $\bar f_2 = 1$.
   This time, assume that the initial value $x_0$ follows a normal distribution, $x_0\sim \calN(\bar\mu_1, \sigma_1^2)$, with $\bar\mu_1 = -3$, $\sigma_1 = 0.3$.
   For ease of presentation, we first limit our attention to the interval $x_0 \in [\bar\mu_1 - 3\sigma_1, \bar\mu_1 + 3\sigma_1]\eqdef \bar X_0$, noting that it contains around 99.7\% of the probability mass. 
   Consistent with the behavior we saw in Example~\ref{ex:crossing1D_nom}, after each point of the interval has passed through the discontinuity, e.g., at time $t=T$, the distribution has been scaled by factor  $\tfrac{\bar f_2}{\bar f_1} = \tfrac{1}{3}$, cf. Fig~\ref{fig:crossing1D_nominal}.
   Correspondingly, for $x_0\in\bar X_0$, the evolved state $x(T; x_0)$ is distributed like $\calN( x(T; \bar\mu_1), \sigma_2)$ with
   $\sigma_2 = \tfrac{\bar f_2}{\bar f_1}\sigma_1 = \tfrac{1}{3} \sigma_1$.
   This holds, because $x(T; x_0)$ is an affine map with respect to $\bar X_0$, cf. Fig~\ref{fig:crossing1D_nominal} (right).
   If the interval has not yet fully crossed the discontinuity, only the points that did are scaled correspondingly.

   Since $\bar\mu_1$ is the median of the initial distribution, with $50\%$ of probability mass on each side of $\bar\mu_1$, it follows that, as the system evolves, $x(t; \bar\mu_1)$ will always be the median of the evolved distribution.
   However, since the points above $x(t; \bar\mu_1)$ are squeezed together earlier than those below, the mean will be below the median while the distribution is crossing the discontinuity, cf.  Fig.~\ref{fig:crossing1D} (left).
   
   We observe that that there are two virtual normal distributions involved, visualized in Fig.~\ref{fig:crossing1D} (left).
   The first one is associated with points below the switch and given by $\calN(\mu_1(t), \sigma_1^2)$, with the evolution of its mean defined by $\mu_1(0) = \bar\mu_1$ and $\dot \mu_1 = \bar f_1$, such that $\mu_1(t) = \bar \mu_1 + \bar f_1 t$.
   The second distribution is $\calN(\mu_2(t), \sigma_2^2)$ with $\dot \mu_2 = \bar f_2$.
   Further, there is a time point $t_\mathrm{s}$ such that $\mu_1(t_\mathrm{s}) = \mu_2(t_\mathrm{s}) = 0$.
   In the considered example, this is $t_\mathrm{s} = 1$.
   From these conditions we can compute $\bar \mu_2 = \tfrac{\bar f_2}{\bar f_1} \bar \mu_1$ such that $\mu_2(t) = \bar \mu_2 + \bar f_2 t$.

   We now revisit the assumption that $x_0 \sim \calN(\bar\mu_1, \sigma_1^2)$.
   While this is certainly a valid assumption, it is slightly inconsistent:
   It makes a statement about all points, including those above the switching surface.
   But we have seen that as soon as an interval of points passes the discontinuity, their distribution is scaled such that it will be closely related to $\calN(\mu_2(t), \sigma_2^2)$.
   With this in mind, it seems more natural to assume that the distribution for all points above the switch has already been transformed.
   In consequence, we can describe the distribution at time $t$ as
   \begin{equation} \label{eq:switched_normal_dist}
      x(t) \sim \calN_\mathrm{s}(\mu_1(t), \mu_2(t), \sigma_1^2, \sigma_2^2),
   \end{equation}
   where $\mu_1(t)$, $\mu_2(t)$, $\sigma_1$, $\sigma_2$ are as defined above, and which is defined via its PDF
   \begin{equation} \label{eq:switched_normal_distribution}
      \bar\phi_{\mathrm{s}}(x; \mu_1, \mu_2, \sigma_1, \sigma_2) \defeq
      \left\{\begin{array}{rl}
         \tfrac{1}{\sigma_1}\phi( \tfrac{x-\mu_1}{\sigma_1} ), & x < 0, \\ 
         \tfrac{1}{\sigma_2}\phi( \tfrac{x-\mu_2}{\sigma_2} ), & x > 0, \\ 
      \end{array} 
      \right.
   \end{equation}
   with associated CDF $\bar\Phi_\mathrm{s}$.
   We refer to this distribution as a switched normal distribution, since depending on the sign of $x$, this distribution switches between $\calN(\mu_1(t), \sigma_1^2)$ and $\calN(\mu_2(t), \sigma_2^2)$.
   We note that the above definition of $\bar\phi_{\mathrm{s}}$ does not result in a probability distribution for arbitrary parameter values, since its integral over $\R$ is not necessarily given by 1.
   A visualization of this distribution can be found in Fig.~\ref{fig:crossing1D} (right).
\end{example}

We now consider how a normal distribution behaves for the sliding mode, revisiting Example~\ref{ex:sliding1D_nom}.
\begin{example}[Sliding mode (cont.)] \label{ex:sliding}
   Consider the system from Example~\ref{ex:sliding1D_nom}, i.e., dynamics of the form \eqref{eq:pw_const_1D} with $\bar f_1 = 3$ and $\bar f_2 = -1$.
   We want to describe the evolution of $x(t; \bar x_0)\sim\calX(t)$ in the case that $x_0$ follows a normal distribution, $x_0\sim \calN(\bar\mu_0, \sigma^2)$, with $\bar\mu_0 = -1$, $\sigma = 0.6$.
   On both sides of the origin, all points are transported towards the discontinuity, at speeds $\lvert \bar f_1\rvert$ resp. $\lvert \bar f_2\rvert$.
   Points that have reached the origin stay there.

   Again we imagine two virtual normal distributions, $\calN(\mu_1(t), \sigma^2)$ and $\calN(\mu_2(t), \sigma^2)$, with $\mu_i(t) = \bar \mu_0 + \bar f_i t$ for $i=1,2$, visualized in Fig.~\ref{fig:sliding1D} (left).
   Initially, they are identical, $\mu_1(0) = \mu_2(0) = \bar\mu_0$. As time passes, they are transported according to either $\dot x = f_1$ or $\dot x = f_2$.
   Comparing this to the evolution of $\calX(t)$, we see that probability mass below the discontinuity behaves like $\calN(\mu_1(t), \sigma^2)$ whereas above the discontinuity it behaves like $\calN(\mu_1(t), \sigma^2)$.
   The remaining probability mass accumulates at the origin.
   In other words, for $x < 0$ the PDF of $\calX(t)$ is given by the PDF of $\calN(\mu_1(t), \sigma^2)$, whereas for $x > 0$ it corresponds to $\calN(\mu_2(t), \sigma^2)$.
   At $x=0$, a Dirac delta $\delta(x)$ accounts for the remaining probability mass.
   Based on the switched normal distribution defined in \eqref{eq:switched_normal_distribution}
   we can describe this as
   \begin{equation} \label{eq:switched_normal_dist_var}
      x(t) \sim \calN_\mathrm{s}^\prime(\mu_1(t), \mu_2(t), \sigma^2, \sigma^2),
   \end{equation}
   with PDF given by
   \begin{equation} \label{eq:switched_normal_distribution_var}
      \bar\phi_{\mathrm{s}}^\prime(x; \mu_1, \mu_2, \sigma_1, \sigma_2) \defeq \bar\phi_{\mathrm{s}}(x; \mu_1, \mu_2, \sigma_1, \sigma_2)
      + P_0(\mu_1, \mu_2, \sigma_1, \sigma_2)  \delta(x),
   \end{equation}
   where
   \begin{equation}
      P_0(\mu_1, \mu_2, \sigma_1, \sigma_2) \defeq \Phi(-\tfrac{\mu_1}{\sigma_1}) + 1 - \Phi(-\tfrac{\mu_2}{\sigma_2}) 
   \end{equation}
   is the probability mass accumulated at the origin.
   The corresponding CDF is visualized in Fig.~\ref{fig:sliding1D} (right).
   We point out that $\bar\phi_{\mathrm{s}}$ from \eqref{eq:switched_normal_distribution} is the special case of $ \bar\phi_{\mathrm{s}}^\prime$ where  $P_0 = 0$.
   Further, \eqref{eq:switched_normal_distribution_var} only defines a probability distribution for $P_0 \geq 0$.
\end{example}

\begin{figure}
   \centering
   \includegraphics[width=\columnwidth]{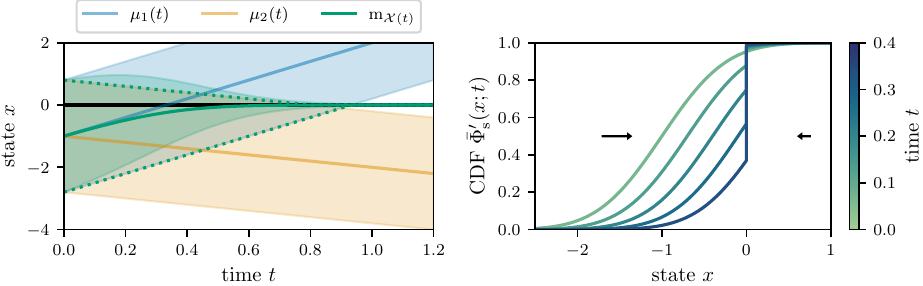}
   \caption{
      Left: The means of the two imagined normal distributions $\calN(\mu_i(t), \sigma_i^2)$, $i=1,2$, compared to the mean of the exactly propagated distribution. The shaded regions indicate $3\sigma$ on each side of the mean. The dotted lines indicate the 99.7\% probability mass corresponding to the original $\pm3\sigma$ region.
      Right: The cumulative density function  $\bar\Phi^\prime_\mathrm{s}(x; \mu_1(t), \mu_2(t), \sigma_1, \sigma_2)$ of the modified switched normal distribution for various time points as the distribution crosses the switch at $x=0$.
      On both sides of $x=0$, the probability mass is transported towards the origin, where it accumulates.
      The arrows indicate the state dynamics.
      }
   \label{fig:sliding1D}
\end{figure}

\subsection{Approximate uncertainty dynamics via normalization}

Even though we can derive analytical results for the specific case considered in the previous subsection, we can see that already in such a simple scenario this becomes rather involved. 
If we want to consider more general situations, the above results are of limited use.
A large part of the complexity came from the fact that the state was not distributed normally.
In the following we will see what happens if at each time we approximate the true distribution by a normal distribution.
This will allow us to derive an explicit ODE for mean and variance.

\begin{proposition}
   Consider the IVP with state $x\in\R$, $x(0) = x_0$, $\dot x = f(x)$, $t\in\mathbb{T}$, with piecewise constant $f$ of the form \eqref{eq:pw_const_1D}, and with $x_0 \sim\calN(\mu, v)$ where $v=\sigma^2$.
   Then for the corresponding IVP in $\mu$ and $v$ as defined by \eqref{eq:ode_mu_sig}, such that $\calN(\mu(t), v(t))$ is an approximation of the exact distribution at time $t$, we can state the right-hand side explicitly as
\begin{subequations} \label{eq:dyn_approx_1Dconst}
      \begin{align} 
         \dot \mu &=  \bar f_1 \Phi(\tfrac{-\mu}{\sqrt{v}}) + \bar f_2 (1 - \Phi(\tfrac{-\mu}{\sqrt{v}})), \label{eq:dyn_approx_1Dconst_mean}
         \\
         \dot v &= 2(\bar f_2 - \bar f_1) \tfrac{1}{\sqrt{v}}\phi(-\tfrac{\mu}{\sqrt{v}}) v. \label{eq:dyn_approx_1Dconst_var}
      \end{align}
\end{subequations}
\end{proposition}
\begin{proof}
   For the mean dynamics we have from \eqref{eq:mu_dot} that
\begin{subequations}
      \begin{align}
      \dot \mu = \EXV_{x\in\calN(\mu, v)} \{ f(x) \} 
      &= \int_{-\infty}^{0} \bar f_1 \tfrac{1}{\sqrt{v}}\phi(\tfrac{x -\mu}{\sqrt{v}}) \mathrm{d}x + \int_{0}^{\infty} \bar f_2 \tfrac{1}{\sqrt{v}}\phi(\tfrac{x -\mu}{\sqrt{v}}) \mathrm{d}x
      \\
      &= \bar f_1 \Phi(\tfrac{-\mu}{\sqrt{v}}) + \bar f_2 (1 - \Phi(\tfrac{-\mu}{\sqrt{v}})),
      \end{align}
\end{subequations}
   where in the last step we used Lemma~\ref{lem:int_aff_norm}. The variance dynamics follow from Lemma~\ref{lem:Sig_dot_lazy}.
   Adapting the multivariate notation of Lemma~\ref{lem:Sig_dot_lazy} to the currently considered univariate case, \eqref{eq:Sig_dot_lazy} reads as $\dot v = 2\dpartial{\hat f_\mu(\mu, v)}{\mu} v$, with $\hat f_\mu(\mu, v)$ given by the right-hand side of \eqref{eq:dyn_approx_1Dconst_mean}.
   Additionally noting that $\dpartial{}{\mu} \Phi(\tfrac{-\mu}{\sqrt{v}}) = - \frac{1}{\sqrt{v}} \phi(\tfrac{-\mu}{\sqrt{v}})$, cf. \eqref{eq:standard_normal_pdf_cdf}, yields \eqref{eq:dyn_approx_1Dconst_var}.
\end{proof}

As it turns out, the mean dynamics \eqref{eq:dyn_approx_1Dconst_mean} are exactly a form of smoothing of the form \eqref{eq:f_smoothed}, with smoothing function $\alpha_\sigma(\xi) = \Phi(\tfrac{\xi}{\sigma})$. This is visualized in Fig.~\ref{fig:crossing1D_approx}.
As long as uncertainty is sufficiently large, $\sigma\gg 0$, the discontinuity is strongly smoothed, such that most standard methods of numerical integration are well suited.
However, if $\sigma$ decreases, the approximate dynamics become increasingly stiff and nonlinear, such that more care needs to be taken.
As we have seen, this is especially relevant in the context of sliding modes, since they inherently lead to a decay of uncertainty.
A practical remedy in this context could be the consideration of process noise, since this would in effect provide a lower bound on the state covariance.
This is however beyond the scope of this paper.

Having pointed out these limitations, we will now try to get an understanding of the error resulting from the proposed approximation. 
For this purpose, we perform the following numerical experiments.

\begin{figure}
   \centering
   \includegraphics[width=\columnwidth]{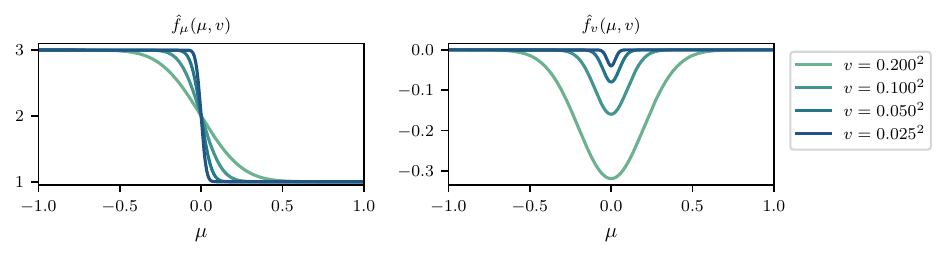}
   \caption{The approximated mean and variance dynamics \eqref{eq:dyn_approx_1Dconst} for the system from Example~\ref{ex:crossing}.}
   \label{fig:crossing1D_approx}
\end{figure}

\begin{example}
We apply dynamics \eqref{eq:dyn_approx_1Dconst} to the system from Example~\ref{ex:crossing} with initial distribution $x_0\sim\calN(\bar \mu_1, \sigma_1)$ and compare the resulting evolution of $\mu(t)$ and $\sigma^2(t)$ to mean and variance of the exactly propagated switched normal distribution \eqref{eq:switched_normal_dist}.
While the initial distribution is not exactly identical to a switched normal distribution, the mean is at a distance of more than 6 standard deviations from the switching surface, such that the initial probability mass on the other side of the switching surface is less than $10^{-9}$, and difference between the two distributions is negligible.
The resulting errors over time are shown in Fig.~\ref{fig:crossing1D_error}.
Some error is accumulated while the distribution is crossing the switch, but the error is constant before and after.

We now perform two experiments on the influence of parameters on the total error that is accumulated during the crossing of the switch.
Strictly speaking the switching process is never completed nor does it have a well defined starting point due to the unbounded support of the normal distributions. 
We use the error at time $T$ as a proxy for the total error while ensuring that both initial and final mean have a distance of at least six standard deviations from the switching surface in their respective direction.
Thus, the probability mass which has not yet switched at the end can be neglected.
In the first experiment, we vary the initial standard deviation $\sigma_0$, with fixed $\bar f_1 =  3$, $\bar f_2 = 1$, $T=2$.
The results are shown in Fig.~\ref{fig:crossing1D_error_exp} (left).
In the second experiment we vary the pre-switching dynamics $\bar f_1$ while keeping the post-switching dynamics fixed at $\bar f_2 = 1$, with initial standard deviation $\sigma_0 = 0.3$ and $T=4$.
For $\bar f_1 = \bar f_2$ we would expect no error since in this case there is no discontinuity, and an error that continuously rises as $\lvert \bar f_1 - \bar f_2 \rvert $  moves away  from zero.
Thus, we vary $f_1$ both in the interval $[\bar f_2 +10^{-12}, \bar f_2 + 10^{-1}]$ and $[\bar f_2 - 10^{-12}, \bar f_2 - 10^{-1}]$.
The results are shown in Fig.~\ref{fig:crossing1D_error_exp} (right).
In combination these experiments suggest that the final integration error is of order $\bigO(\sigma_0 \lvert  \bar f_1 - \bar f_2\rvert)$.
While we do not investigate this in detail here, we point out that this is consistent with the error resulting from smoothing the discontinuity in the nominal case, cf.~\cite{Stewart2010}.
\end{example}

\begin{figure}
   \centering
   \includegraphics[width=\columnwidth]{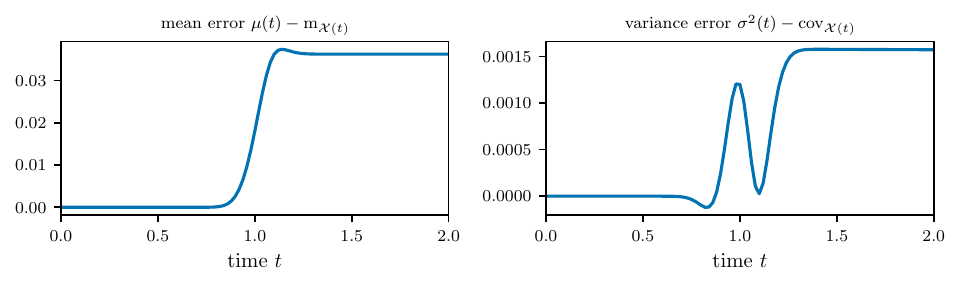}
   \caption{Left: error between exact and approximated mean over time.
   Right: error between exact and approximated variance over time.}
   \label{fig:crossing1D_error}
\end{figure}
\begin{figure}
   \centering
   \includegraphics[width=\columnwidth]{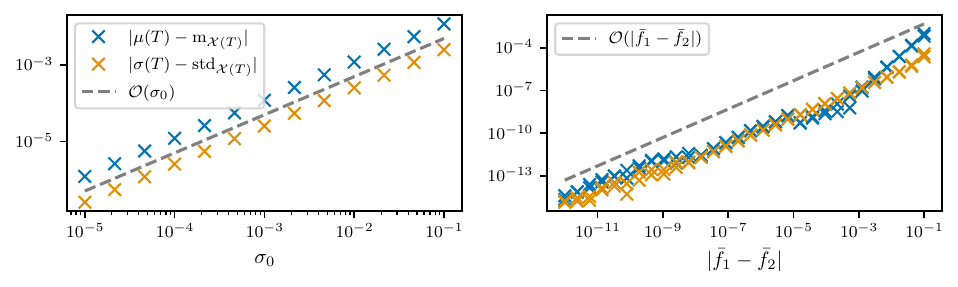}
   \caption{Left: Final integration error as a function of initial variance.
   Right: Final integration error as a function of the jump in the dynamics.
   The plot includes both the case where $\bar f_1 - \bar f_2$ is positive and where it is negative.
   }
   \label{fig:crossing1D_error_exp}
\end{figure}

\section{Uncertainty propagation for piecewise affine systems}
\label{sec:switch_aff_nD}

After having developed an understanding of how a normal distribution behaves when passing through the discontinuity of an ODE with piecewise constant right-hand side, we now consider the more general case of a piecewise affine system with affine switching function,
\begin{equation} \label{eq:f_pw_aff}
   f(x) \defeq \left \{ \begin{array}{rl} A_1 x + \bar f_1, & g^\top(x-\bar x)< 0, \\A_2 x + \bar f_2, & g^\top(x-\bar x) > 0.\end{array} \right.
\end{equation}
This corresponds to the specific of \eqref{eq:disc_ode} where $f_1(x) = A_1 x +\bar f_1$, $f_2(x) = A_2 x +\bar f_2$ and $\psi(x) = g^\top(x - \bar x)$, with $A_1, A_2 \in \R^{n\times n}$, $f_1, f_2, g, \bar x \in \R^n$.
In this case the switching surface is a hyper plane $\{ x\in\R^n \mid g^\top(x-\bar x) = 0 \} $ with normal vector $g$ and $\bar x$ an arbitrary point on the surface.

While in the piecewise constant case a normal distribution is asymptotically recovered after crossing a discontinuity, this is not generally true in the piecewise affine case. 
However, similarly to the switched constant case, during the switching process the distribution will be strongly different from a normal distribution, but after crossing the resulting distribution will resemble a slightly perturbed normal distribution.

For this case we will not try to find an exact parameterization of the distribution as it encounters the switching surface.
Instead, we will directly derive approximate dynamics for mean and variance and validate them by sampling.
Before stating these dynamics we define by $\tilde\phi_{g}(\xi; \mu, \Sigma)$ the PDF of the univariate normal distribution obtained by projecting the $n$-variate distribution $\calN(\mu, \Sigma)$ onto the direction $g\in\R^n$,
\begin{equation}
   \tilde\phi_{g}(\xi; \mu, \Sigma) \defeq \frac{1}{\sqrt{g^\top \Sigma g}} \phi\left(\frac{\xi - g^\top \mu}{ \sqrt{g^\top \Sigma g}}\right),
\end{equation}
where $\xi \in\R$ is the remaining degree of freedom after projection, and
which has mean $\mu_g \defeq g^\top \mu$, standard deviation $\sigma_g \defeq \sqrt{g^\top \Sigma g}$ and associated CDF $\tilde\Phi_g(\xi; \mu, \Sigma)$.
Similarly, the projection of $\bar x$ is denoted by $\bar x_g \defeq g^\top \bar x$

\begin{proposition} \label{prop:musigdot_swi_aff_approx}
   Consider an IVP with state $x\in\R^n$, $x(0) = x_0$, $\dot x = f(x)$, $t\in\mathbb{T}$, $x_0 \sim\calN(\mu, \Sigma)$, and with $f(x)$ piecewise affine with affine switching function as in \eqref{eq:f_pw_aff}.
   Then for the corresponding IVP in $\mu$ and $\Sigma$ as defined by \eqref{eq:ode_mu_sig}, such that $\calN(\mu(t), \Sigma(t))$ is an approximation of the exact distribution at time $t$, we can state the right-hand side explicitly as
   \begin{equation} \label{eq:dyn_swi_aff_approx}
      \dot \mu =  \hat f_\mu(\mu, \Sigma),
      \quad
      \dot \Sigma =\underbrace{\dpartial{ \hat f_\mu(\mu, \Sigma)}{\mu} \Sigma + \Sigma\dpartial{ \hat f_\mu(\mu, \Sigma)}{\mu} ^\top}_{\displaystyle = \hat f_\Sigma(\mu, \Sigma)}
   \end{equation}
   with 
   \begin{subequations} \label{eq:musig_dot_pw_aff}
   \begin{align}
      \label{eq:f_mu}
       \hat f_\mu(\mu, \Sigma) &= (A_2 - A_1)\Sigma g  \tilde\phi_g(\bar x_g; \mu, \Sigma )
      + f_1(\mu)  \tilde\Phi_g(\bar x_g; \mu, \Sigma )
      + f_2(\mu) (1 - \tilde\Phi_g(\bar x_g; \mu, \Sigma )),\\
      \label{eq:Sig_mu}
      \hat f_\Sigma(\mu,\Sigma) &=
      (A_2 - A_1) \tfrac{ \Sigma g g^\top \Sigma}{g^\top\Sigma g} (\bar x_g - g^\top \mu)  \tilde\phi_g(\bar x_g; \mu, \Sigma ) )+ (f_2(\mu) - f_1(\mu)) g^\top \Sigma \tilde\phi_g(\bar x_g; \mu, \Sigma )\\
      &\qquad+ A_1\Sigma  \tilde\Phi_g(\bar x_g; \mu, \Sigma ) + A_2\Sigma (1- \tilde\Phi_g(\bar x_g; \mu, \Sigma )) + (\star).  \nonumber
   \end{align}
   \end{subequations}
\end{proposition}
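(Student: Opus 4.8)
The plan is to first derive the closed form \eqref{eq:f_mu} of the mean dynamics $\hat f_\mu(\mu,\Sigma)=\EXV_{x\sim\calN(\mu,\Sigma)}\{f(x)\}$, and then obtain the covariance dynamics \eqref{eq:Sig_mu} almost for free from Lemma~\ref{lem:sigma_dot} by differentiating $\hat f_\mu$ with respect to $\mu$. For the first part I would write the piecewise affine $f$ of \eqref{eq:f_pw_aff} in the single-line form $f(x)=f_1(x)+(f_2(x)-f_1(x))\,\mathbf{1}[g^\top(x-\bar x)>0]$, where $f_i(x)=A_ix+\bar f_i$ and $\mathbf{1}[\cdot]$ is the indicator. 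Taking expectations and using that $f_1$ is affine, so $\EXV\{f_1(x)\}=f_1(\mu)$, reduces the task to evaluating $\EXV\{(f_2(x)-f_1(x))\mathbf{1}[g^\top x>\bar x_g]\}=(A_2-A_1)\EXV\{x\,\mathbf{1}[g^\top x>\bar x_g]\}+(\bar f_2-\bar f_1)\,\prob\{g^\top x>\bar x_g\}$. The probability term is $1-\tilde\Phi_g(\bar x_g;\mu,\Sigma)$ straight from the definition of the projected distribution.

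The crux is the truncated first moment $\EXV\{x\,\mathbf{1}[g^\top x>\bar x_g]\}$. The key observation is that the half-space only constrains the scalar $\xi\defeq g^\top x$, which is distributed $\calN(g^\top\mu,g^\top\Sigma g)$, and that $x$ and $\xi$ are jointly Gaussian with affine conditional mean $\EXV\{x\mid g^\top x=\xi\}=\mu+\tfrac{\Sigma g}{g^\top\Sigma g}(\xi-g^\top\mu)$; equivalently, decompose $x=\mu+\tfrac{\Sigma g}{g^\top\Sigma g}\,g^\top(x-\mu)+w$ with $w$ uncorrelated with, hence independent of, $g^\top x$. Then, by the tower rule, $\EXV\{x\,\mathbf{1}[\xi>\bar x_g]\}=\mu\,\prob\{\xi>\bar x_g\}+\tfrac{\Sigma g}{g^\top\Sigma g}\,\EXV\{(\xi-g^\top\mu)\,\mathbf{1}[\xi>\bar x_g]\}$, and the remaining one-dimensional integral is precisely \eqref{eq:int_aff_norm_2} of Lemma~\ref{lem:int_aff_norm} with $\alpha=1$, $\beta=-g^\top\mu$, yielding $(g^\top\Sigma g)\,\tilde\phi_g(\bar x_g;\mu,\Sigma)$. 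The variance factors then cancel, giving $\EXV\{x\,\mathbf{1}[\xi>\bar x_g]\}=\mu(1-\tilde\Phi_g(\bar x_g;\mu,\Sigma))+\Sigma g\,\tilde\phi_g(\bar x_g;\mu,\Sigma)$. Collecting all pieces and regrouping $f_1(\mu)$ with the $(1-\tilde\Phi_g)$ term turns $f_1(\mu)+(f_2(\mu)-f_1(\mu))(1-\tilde\Phi_g)$ into $f_1(\mu)\tilde\Phi_g+f_2(\mu)(1-\tilde\Phi_g)$, which is exactly \eqref{eq:f_mu}.

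For the covariance, Lemma~\ref{lem:sigma_dot} gives $\hat f_\Sigma(\mu,\Sigma)=\dpartial{\hat f_\mu(\mu,\Sigma)}{\mu}\Sigma+(\star)$, so it only remains to differentiate \eqref{eq:f_mu} in $\mu$. Using the chain rule together with $\phi'(\nu)=-\nu\phi(\nu)$ one obtains $\dpartial{}{\mu}\tilde\Phi_g(\bar x_g;\mu,\Sigma)=-\tilde\phi_g(\bar x_g;\mu,\Sigma)\,g^\top$ and $\dpartial{}{\mu}\tilde\phi_g(\bar x_g;\mu,\Sigma)=\tfrac{\bar x_g-g^\top\mu}{g^\top\Sigma g}\,\tilde\phi_g(\bar x_g;\mu,\Sigma)\,g^\top$, while $\dpartial{}{\mu}f_i(\mu)=A_i$. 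Applying the product rule term by term to \eqref{eq:f_mu} and collecting, the Jacobian becomes $(A_2-A_1)\tfrac{\Sigma gg^\top}{g^\top\Sigma g}(\bar x_g-g^\top\mu)\tilde\phi_g+(f_2(\mu)-f_1(\mu))\tilde\phi_g g^\top+A_1\tilde\Phi_g+A_2(1-\tilde\Phi_g)$; right-multiplying by $\Sigma$ and adding the transpose reproduces \eqref{eq:Sig_mu}.

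The main obstacle is the reduction in the second paragraph: recognizing that the $n$-dimensional half-space Gaussian integral collapses to a one-dimensional one — because the integrand $f_2-f_1$ is affine and the constraint involves only $g^\top x$ — so that Lemma~\ref{lem:int_aff_norm} applies after conditioning on $g^\top x$. A secondary source of friction is keeping the transpose conventions consistent while differentiating in the third paragraph and matching the symmetrized $(\star)$ form of \eqref{eq:Sig_mu}. One could instead evaluate $\hat f_\Sigma$ directly from \eqref{eq:Sigdot_2} as $\EXV\{f(x)(x-\mu)^\top\}+(\star)$, but that would require truncated second-moment integrals of the form $\EXV\{xx^\top\mathbf{1}[\cdot]\}$, so routing through Lemma~\ref{lem:sigma_dot} is clearly the better choice.
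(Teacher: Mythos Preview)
Your proof is correct and follows essentially the same route as the paper: both reduce the $n$-dimensional expectation to a one-dimensional one by conditioning on the scalar $g^\top x$ (the paper does this after a WLOG rotation to $g=e_1$ and block-partitioning, you do it directly via the projection and the indicator representation of $f$), then invoke Lemma~\ref{lem:int_aff_norm} for the resulting 1D integral and Lemma~\ref{lem:sigma_dot} for the covariance. The only cosmetic difference is that you carry out the $\mu$-differentiation of $\hat f_\mu$ explicitly to verify \eqref{eq:Sig_mu}, whereas the paper simply states that it follows from Lemma~\ref{lem:sigma_dot}.
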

\begin{corollary}
   Consider the same situation as in Prop. \ref{prop:musigdot_swi_aff_approx} and additionally assume $A_1=A_2\eqdef A$.
   Then
   \begin{subequations}
      \begin{align}
      \label{eq:f_mu_cor}
       \hat f_\mu(\mu, \Sigma) &= A \mu + \bar f_1 \tilde\Phi_g(\bar x_g; \mu, \Sigma )
      + \bar f_2 (1 - \tilde\Phi_g(\bar x_g; \mu, \Sigma )),\\
      \label{eq:Sig_mu_cor}
      \hat f_\Sigma(\mu,\Sigma) &=
      (\bar f_2 - \bar f_1) g^\top \Sigma \tilde\phi_g(\bar x_g; \mu, \Sigma )
     + A\Sigma + (\star). 
   \end{align}
   \end{subequations}
\end{corollary}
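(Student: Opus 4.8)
The plan is to obtain the corollary by specializing Proposition~\ref{prop:musigdot_swi_aff_approx} to the case $A_1=A_2=A$; no new integration is required. Under this assumption $f_1(x)=Ax+\bar f_1$ and $f_2(x)=Ax+\bar f_2$, so that $A_2-A_1=0$ and $f_2(\mu)-f_1(\mu)=\bar f_2-\bar f_1$ for every $\mu\in\R^n$.

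First I would substitute into \eqref{eq:f_mu}. The term $(A_2-A_1)\Sigma g\,\tilde\phi_g(\bar x_g;\mu,\Sigma)$ drops out, and collecting the affine parts gives $f_1(\mu)\tilde\Phi_g(\bar x_g;\mu,\Sigma)+f_2(\mu)\big(1-\tilde\Phi_g(\bar x_g;\mu,\Sigma)\big)=A\mu+\bar f_1\tilde\Phi_g(\bar x_g;\mu,\Sigma)+\bar f_2\big(1-\tilde\Phi_g(\bar x_g;\mu,\Sigma)\big)$, which is exactly \eqref{eq:f_mu_cor}. Next I would substitute into \eqref{eq:Sig_mu}: the first term again vanishes because of the factor $A_2-A_1$, the second term becomes $(\bar f_2-\bar f_1)\,g^\top\Sigma\,\tilde\phi_g(\bar x_g;\mu,\Sigma)$, and $A_1\Sigma\tilde\Phi_g(\bar x_g;\mu,\Sigma)+A_2\Sigma\big(1-\tilde\Phi_g(\bar x_g;\mu,\Sigma)\big)=A\Sigma$, yielding \eqref{eq:Sig_mu_cor} after adding the symmetrizing $(\star)$.

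As a consistency check, and an alternative route that avoids quoting the full $\hat f_\Sigma$ expression of the proposition, I would verify \eqref{eq:Sig_mu_cor} directly against \eqref{eq:Sig_dot_lazy}. Differentiating \eqref{eq:f_mu_cor} in $\mu$ and using $\tilde\Phi_g(\bar x_g;\mu,\Sigma)=\Phi\big((\bar x_g-g^\top\mu)/\sqrt{g^\top\Sigma g}\big)$ together with $\Phi'=\phi$ gives $\dpartial{\hat f_\mu(\mu,\Sigma)}{\mu}=A+(\bar f_2-\bar f_1)\,\tilde\phi_g(\bar x_g;\mu,\Sigma)\,g^\top$; inserting this into $\dpartial{\hat f_\mu(\mu,\Sigma)}{\mu}\Sigma+\Sigma\dpartial{\hat f_\mu(\mu,\Sigma)}{\mu}^\top$ reproduces \eqref{eq:Sig_mu_cor}, since $A\Sigma$ and $(\bar f_2-\bar f_1)\,\tilde\phi_g(\bar x_g;\mu,\Sigma)\,g^\top\Sigma$ are exactly the two terms symmetrized by $(\star)$.

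There is essentially no hard step here; the only things to be careful about are the bookkeeping of row/column conventions — recall that in this paper $\dpartial{}{\mu}$ denotes a Jacobian, so $\dpartial{\tilde\Phi_g}{\mu}$ is a row vector and $\dpartial{\hat f_\mu}{\mu}\in\R^{n\times n}$ — and keeping track of precisely which expression the $(\star)$ shorthand is meant to symmetrize in each line.
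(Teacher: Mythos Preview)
Your proposal is correct and matches the paper's intent: the paper gives no separate proof for the corollary, leaving it as an immediate specialization of Proposition~\ref{prop:musigdot_swi_aff_approx} under $A_1=A_2$, which is exactly what you carry out. Your additional consistency check via \eqref{eq:Sig_dot_lazy} is a nice touch and is also correct.
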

\begin{proof}
   Without loss of generality we state the proof under the assumption that the switching function gradient is given by $g=(1,0,\dots,0)$, such that the first dimension of the state space is orthogonal to the switching surface.
   If this assumption does not hold one may substitute $y = Rx$, with invertible $R\in\R^{n \times n}$, such that in the transformed state space the assumption holds.
   We use Lemma~\ref{lem:error_from_wrong_expectation} and \ref{lem:Sig_dot_lazy} to derive \eqref{eq:musig_dot_pw_aff}.
   The basic structure of the proof is to separate the expectation in \eqref{eq:mu_dot_2} into two steps:
   the expectation over the first dimension, in which the switch occurs, and the conditional expectation over the remaining dimensions, with respect to which the dynamics are linear.
   We partition the state space as
   \begin{equation}
      x\sim\calN(\mu, \Sigma), \;
      x = \begin{bmatrix} x_1 \\ x_2 \end{bmatrix}, \; \mu = \begin{bmatrix} \mu_1 \\ \mu_2  \end{bmatrix}, \;
      \Sigma = \begin{bmatrix} \sigma_g^2 & \Sigma_{12} \\ \Sigma_{21} & \Sigma_{22} \end{bmatrix},
   \end{equation}
   where $x_1, \mu_1 \in \R$, $x_2, \mu_2 \in \R^{n-1}$, $\sigma_g^2 \in \R$, $\Sigma_{22} \in \SSS^{n-1}$ and $\Sigma_{21}, \Sigma_{12}$ correspondingly. 
   We use $\sigma_g^2 = g^\top \Sigma g $ instead of $\Sigma_{11}$ to emphasize that in $x_1$ we have a univariate (normal) distribution resulting from projecting $\calN(\mu, \Sigma)$ onto the direction $g$.

From \eqref{eq:mu_dot_2} we obtain the mean dynamics as
\begin{subequations} \label{eq:f_mu_proof_part_1}
   \begin{align} 
      \hat f_\mu(\mu, \Sigma) 
      &= \EXV_{x\sim\calN(\mu, \Sigma)} \{ f(x) \} 
      = \EXV_{x_1}\{  \EXV_{x_2 \mid x_1} \{f( x)\}  \} \\&
      = \EXV_{x_1}\{ f\left( \EXV_{x_2 \mid x_1} \{x\}\right)  \} 
      = \EXV_{x_1\sim\calN( \mu_1,  \sigma_g^2 )}\{ f(\breve \mu(x_1) \}
      \end{align}
\end{subequations}
where we first use conditional expectation and then move the expectation over $x_2$ into $f(\cdot)$ which is allowed since the switch depends only on $x_1$ such that $f(\cdot)$ is affine in $x_2$.
For the inner expectation we use $\EXV_{x_2 \mid x_1} \{x_1\} = x_1$ and $\EXV_{x_2 \mid x_1} \{x_2\} = \mu_2 + \Sigma_{21}\sigma_g^{-2}(x_1 -  \mu_1)$ due to $x\sim\calN(\mu, \Sigma)$ such that over all
   \begin{equation}
      \breve \mu(x_1) \defeq
      \EXV_{x_2 \mid x_1} \{x\} =
      \begin{bmatrix} 1 \\ \Sigma_{21}\sigma_g^{-2} \end{bmatrix} x_1 + \begin{bmatrix} 0 \\  \mu_2 -  \Sigma_{21} \sigma_g^{-2} \mu_1\end{bmatrix}.
   \end{equation}
Defining
\begin{equation}
      a_i \defeq A_i  \begin{bmatrix} 1 \\ \Sigma_{21}\sigma_g^{-2}\end{bmatrix},
      \quad 
      b_i \defeq A_i  \begin{bmatrix} 0 \\  \mu_2 -  \Sigma_{21} \sigma_g^{-2} \mu_1\end{bmatrix} + \bar f_1, \quad i=1,2.
\end{equation}
   allows us to write 
\begin{equation}
   f(\breve\mu(x_1)) = 
      \left \{ \begin{array}{rl} a_1 x_1 + b_1, & x_1 - \bar x_1 < 0, \\ a_2 x_1 + b_2, &x_1 - \bar x_1  > 0,  \end{array} \right.
\end{equation}
where $\bar x_1 \in \R$ results from the partitioning of $\bar x = (\bar x_1, \bar x_2)$.
Continuing \eqref{eq:f_mu_proof_part_1}, we get
\begin{subequations}
   \begin{align}
      \hat f_\mu(\mu, \Sigma)
      &=  \EXV_{x_1\sim\calN( \mu_1,  \sigma_g^2 )}\{  f(\breve\mu(x_1))\}\\
      &=
      \int_{-\infty}^{\bar x_1} (a_1 x_1 + b_1)  \ \tfrac{1}{ \sigma_g} \phi(\tfrac{x_1 -  \mu_1}{\sigma_g}) \mathrm{d}x_1
       +\int_{\bar x_1}^\infty (a_2 x_1 + b_2) \tfrac{1}{ \sigma_g} \phi(\tfrac{x_1 -  \mu_1}{\sigma_g}) \mathrm{d}x_1 \\
      &= (a_2 - a_1) \sigma_g^2 \tfrac{1}{ \sigma_g} \phi(\tfrac{\bar x_1 -  \mu_1}{\sigma_g})
       + (a_1\mu_1 + b_1 )  \Phi(\tfrac{\bar x_1 -  \mu_1}{\sigma_g})
       +   (a_2 \mu_1 + b_2 )  (1-\Phi(\tfrac{\bar x_1 -  \mu_1}{\sigma_g})),
   \end{align}
\end{subequations}
where the last line results from applying Lemma~\ref{lem:int_aff_norm} to each row of the integrated function.
Resubstitution of $a_i$, $b_i$ yields
\begin{equation}
   a_i \mu_1 + b_i = A_i \mu + \bar f_i = f_i(\mu), \quad a_i \sigma_g^2 = A_i\Sigma g,  \quad i=1,2,
\end{equation}
such that
\begin{equation}
      \hat f_\mu(\mu, \Sigma) = (A_2 - A_1) \Sigma g \tilde\phi_g(\bar x; \mu, \Sigma ) 
      + f_1(\mu )  \tilde\Phi_g(\bar x; \mu, \Sigma ) + f_2(\mu) (1 - \tilde\Phi_g(\bar x; \mu, \Sigma )),
\end{equation}
which is identical to \eqref{eq:f_mu}. The variance dynamics \eqref{eq:Sig_mu} follow from Lemma~\ref{lem:Sig_dot_lazy}.
\end{proof}
As discussed in the previous section, this approximation leads to an error even in the piecewise constant case.
However, it is principled in the following sense:
(a) If $A_1=A_2$ and $\bar f_1 = \bar f_2$, i.e., if effectively there is no discontinuity, the corresponding Lyapunov equation for a linear system is recovered \cite{Soederstroem2002}.
(b) With increasing distance of the mean from the switching surface, as measured in terms of the projected standard deviation $\sigma_g$, both $\tilde\phi_g(\bar x_g; \mu, \Sigma)$ and $\tilde\Phi_g(\bar x_g; \mu, \Sigma)$ decay exponentially to $0$ resp. $1$.
Thus, the Lyapunov dynamics for the corresponding mode are recovered asymptotically.

\begin{figure}
   \centering
   \includegraphics[width=\columnwidth]{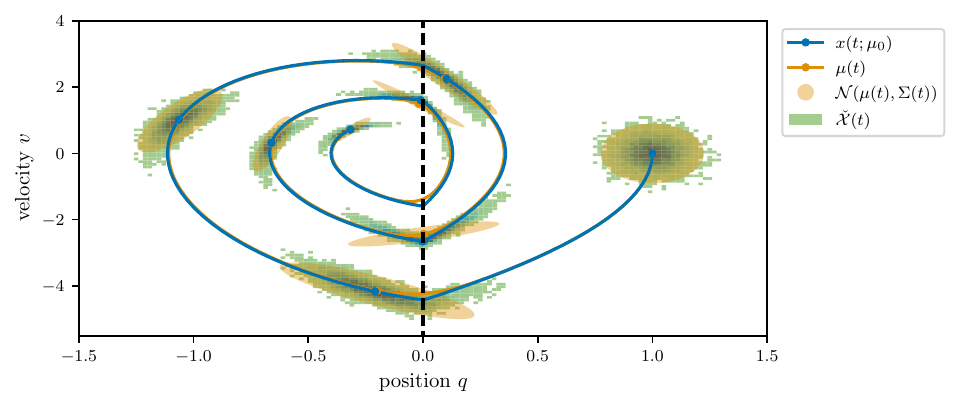}
   \caption{
   Trajectories for the switched system from Example~\ref{ex:bouncing_ball} in state space with histograms of the sampling distribution $\check\calX$ in comparison with the approximating normal distribution shown for selected times.
   }
   \label{fig:lin2D_det}
\end{figure}
\begin{figure}
   \centering
   \includegraphics[width=\columnwidth]{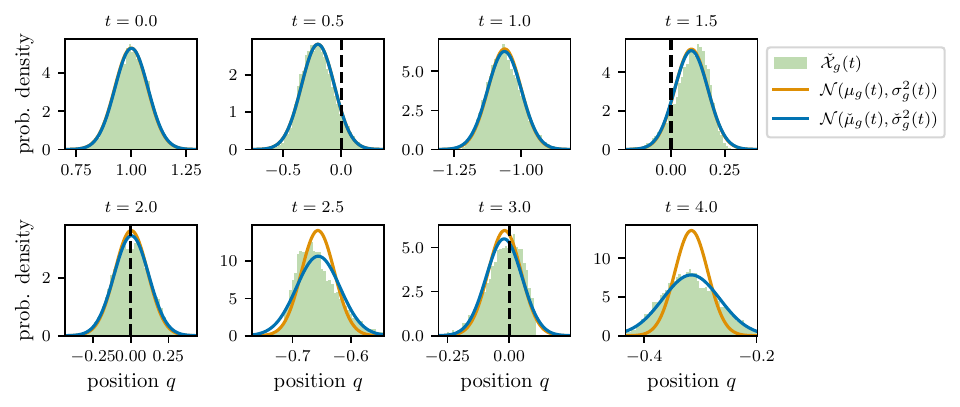}
   \caption{
      The state distributions for the switched system from Example~\ref{ex:bouncing_ball} at selected times projected onto the (negative) switching surface gradient direction. Compared are the sample distribution $\check\calX(t)$, the normal distribution $\calN(\check \mu_g(t), \check\sigma_g^2(t))$ that has the same mean and variance as the sample distribution, and the normal distribution $\calN(\mu_g(t), \sigma_g^2(t))$ resulting from the approximate propagation of mean and variance through \eqref{eq:musig_dot_pw_aff}.}
   \label{fig:lin2D_det_proj}
\end{figure}

\begin{example} \label{ex:bouncing_ball}
   We consider a linear spring/dashpot contact-impact model \cite{Brogliato2016}.
   The system has state $x=(q,v)$ with position $q \in \R$ and velocity $v \in \R$.
   The position $q=0$ corresponds to the system being in contact with a wall but with uncompressed spring.
   For $q<0$ the system is in contact and the spring is compressed such that $\dot v = -k q - c v$, with spring constant $k$ and damping $c$.
   For $q>0$ there is no contact and the spring is uncompressed.
   However, an external force of magnitude $g$ is applied, pushing the mass towards the wall.
   Thus in this case $\dot v = -g$.
   Overall, the dynamics can be written as
   \begin{equation}
      \dot x = \left\{
         \begin{array}{rl} f_1(x), & \psi(x) < 0,  \\f_2(x), & \psi(x) > 0,  \end{array} \right.
          \qquad
         f_1(x) = \begin{bmatrix} x_2 \\ -g \end{bmatrix}, \quad
      f_2(x) = \begin{bmatrix} x_2 \\ -k x_1 - c x_2 \end{bmatrix}, \quad
      \psi(x) = -x_1.
   \end{equation}
   The initial state is distributed as $x_0\sim\calX_0 = \calN(\mu_0, \Sigma_0)$.
   We simulate the system based on the approximate dynamics \eqref{eq:dyn_swi_aff_approx} for mean and covariance yielding a time varying normal distribution $\calN(\mu(t), \Sigma(t))$.
   As proxy for the exact evolution of the distribution we sample $10^4$ points from $\calX_0$ and propagate them based on the nonsmooth dynamics, yielding the sample distribution $\check \calX(t)$ with $\check \calX(0)\approx \calX_0$ and associated mean $\check \mu(t) = \mean_{\check \calX(t)}$ and covariance $\check\Sigma(t) = \cov_{\check \calX(t)}$.
   We also consider the propagation of the original mean $\mu_0$ through the nonsmooth dynamics, denoted by $x(t; \mu_0)$, since for this example it will always correspond to the mode of the exactly propagated distribution.
   The results are shown in Fig.~\ref{fig:lin2D_det}.
   For the sample distribution it can be seen that it becomes strongly deformed -- like a kink -- when crossing a switch but mostly recovers when each sample has passed, cf. Fig.~\ref{fig:lin2D_det} and \ref{fig:lin2D_det_proj}. However some small deformation with respect to a normal distribution remains and accumulates with each switch.
   When looking at the distribution projected onto the switching surface gradient $g$, Fig.~\ref{fig:lin2D_det_proj}, the non-normalcy seems a lot less severe.
   This is relevant since only this projected distribution is used in the dynamics \eqref{eq:musig_dot_pw_aff}.
\end{example}

\section{Uncertainty propagation for general piecewise smooth systems}
\label{sec:pw_smooth}

We return to the general case of \eqref{eq:disc_ode} where the right-hand side of the ODE takes the form 
\begin{equation} \label{eq:f_disc_nl}
   f(x) = \left \{ \begin{array}{rl} f_1(x), & \psi(x)< 0, \\ f_2(x), & \psi(x) > 0,  \end{array} \right.
\end{equation}
with smooth nonlinear components $f_1$, $f_2$, $\psi$.
Given the results for the piecewise affine case from the previous section, we show how they can be applied in this more general setting, based on linearization.
In contrast to the standard linearization based approaches, cf. \eqref{eq:musigdot_standard} and \eqref{eq:musigplus_standard}, which would linearize the full integration map resp. the smooth approximation \eqref{eq:f_smoothed} of the right-hand side, we linearize only the components of \eqref{eq:f_disc_nl} such that the specific structure of the discontinuity is preserved.
We call
\begin{equation}
   f_\mathrm{lin,pw}(x; \mu) = \left \{ \begin{array}{rl} f_1(\mu) + \nabla f_1(\mu)^\top (x - \mu) , & \psi(\mu) + \nabla \psi(\mu)^\top(x - \mu) < 0, \\ f_2(\mu) + \nabla f_2(\mu)^\top (x - \mu), & \psi(\mu) + \nabla \psi(\mu)^\top(x - \mu)  > 0,  \end{array} \right.
\end{equation}
the piecewise linearization of $f$ at $\mu$, which is identical to $f$ if $f$ is piecewise affine.
After this piecewise linearization, the results of Proposition~\ref{prop:musigdot_swi_aff_approx} can in principle be applied, but with some additional dependencies on $\mu$.
The resulting dynamics are
\begin{equation}
   \dot \mu =  \doublehat f_\mu(\mu, \Sigma),
   \quad
   \dot \Sigma = \doublehat f_\Sigma(\mu, \Sigma),
\end{equation}
with 
\begin{subequations} \label{eq:musigdot_pwlinearized}
\begin{align} 
    \doublehat f_\mu(\mu, \Sigma) &\defeq (\nabla f_2(\mu) - \nabla f_1(\mu))^\top \Sigma \nabla \psi(\mu) \frac{1}{\sigma_\psi(\mu, \Sigma)} \phi\left( \frac{-\psi(\mu)}{\sigma_\psi(\mu, \Sigma)} \right) \\
   &\qquad+ f_1(\mu)  \Phi\left( \frac{-\psi(\mu)}{\sigma_\psi(\mu, \Sigma)} \right)+ f_2(\mu)  \left(1 - \Phi\left( \frac{-\psi(\mu)}{\sigma_\psi(\mu, \Sigma)} \right) \right),\\
   \doublehat f_\Sigma(\mu, \Sigma) &\defeq \dpartial{ \doublehat f_\mu(\mu, \Sigma)}{\mu} \Sigma + \Sigma\dpartial{ \doublehat f_\mu(\mu, \Sigma)}{\mu} ^\top,
\end{align}
\end{subequations}
where $\sigma_\psi(\mu, \Sigma) \defeq \sqrt{\nabla \psi(\mu)^\top \Sigma \nabla \psi(\mu)}$ is the standard deviation orthogonal to the switching surface with respect to the linearization of $\psi$ at $\mu$.
Due to the additional dependencies of $\doublehat f_\mu(\mu, \Sigma)$ on $\mu$ the explicit expression for $\doublehat f_\Sigma(\mu, \Sigma)$ is more involved than \eqref{eq:Sig_mu}.
However, if these equations are implemented with a symbolic framework such as CasADi \cite{Andersson2019} there is no need to derive the explicit expressions by hand.
In the case that $f_1$, $f_2$, and $\psi$ are affine, the expressions in 
\eqref{eq:musigdot_pwlinearized} simplify to those of \eqref{eq:musig_dot_pw_aff}.

\section{Stochastic optimal control problem formulation}
\label{sec:stochOCP}
We now demonstrate how the derived dynamics can be used within a stochastic OCP formulation.
After augmenting the dynamics by an argument for the control vector $u(t)\in\R^{n_u}$ -- which for the purpose of integration can be seen as a time dependent parameter -- a rather general OCP can be stated as
\begin{mini!}
   {  \textstyle  u(\cdot),  \mu(\cdot),  \Sigma(\cdot) }
   { \int_{0}^{T} l(\mu(t), u(t)) \mathrm{d}t + L(\mu(T))}
   {\label{eq:stochOCPct}}{}
   \addConstraint{\mu(0)}{=\bar \mu_0, \; \Sigma(0) = \bar\Sigma_0 \label{eq:stochOCPct_iv}}
   \addConstraint{\dot\mu(t)}{= \doublehat f_\mu(\mu(t), \Sigma(t), u(t)),}{t\in[0,T]}
   \addConstraint{\dot\Sigma(t)}{= \doublehat f_\Sigma(\mu(t), \Sigma(t), u(t)),}{t\in[0,T]}
   \addConstraint{0}{\geq g(u(t)),}{t\in[0,T]}
   \addConstraint{0}{\geq h^i(\mu(t)) + \gamma\sqrt{ \nabla h^i(\mu(t))^\top \Sigma(t) \nabla h^i(\mu(t)) },\quad}{ t\in[0,T], \; i=1,\dots,n_h. }
\end{mini!}
Here we consider stage cost $l$ and terminal cost $L$ only for state mean $\mu$ and controls $u$, but it could be straightforwardly extended by a direct cost on the variance $\Sigma$.
The initial mean and covariance are fixed to $\bar\mu_0$ resp. $\bar\Sigma_0$.
For simplicity of notation we consider separate constraints on controls, $g(u) \leq 0$, $g\colon \R^{n_x}\to \R^{n_g}$, and states, $h(x) \leq 0$, $h\colon \R^{n_u}\to \R^{n_h}$, although the formulation can be straightforwardly generalized to combined constraints.
Since $x \in\calN(\mu,\Sigma)$ has unbounded support, the state constraints cannot be strictly enforced for all possible values of $x$.
Instead we use individual chance constraints in each component $h^i(x)$, $i=1,\dots, n_h$, requiring that at each time $t$ (individually) the probability of constraint satisfaction should be at least $p$,
\begin{equation}
   \prob_{x\sim{\calN(\mu, \Sigma)}}\{ h^i(x) \leq 0 \} \geq p.
\end{equation}
After linearization of $h(x)$ at $\mu$ and for $\tfrac{1}{2} < p < 1$, a tractable approximation can be written as
\begin{equation}
   h^i(\mu) + \gamma(p)\sqrt{ \nabla h^i(\mu)^\top \Sigma \nabla h^i(\mu)} \leq 0,
\end{equation}
which consists of the constraint function evaluated at the mean plus an additional backoff term. The backoff term is given by the standard deviation in direction orthogonal to the linearized constraint scaled by $\gamma(p) = \Phi^{-1}(p)$, with $\Phi^{-1}(p)$ the inverse CDF of the standard normal distribution.

The resulting OCP~\eqref{eq:stochOCPct} can then be treated by standard methods of direct optimal control \cite{Rawlings2017} to obtain a nonlinear program (NLP) which can be solved via numerical optimization \cite{Nocedal2006}.

For the examples considered in the following we discretize the time interval $[0,T]$ into an equidistant grid of $N$ intervals, such that the corresponding step length is $h=T/N$, and enumerate the grid points by $k=0,\dots,N$.
Within each interval a constant control $u_k \in\R^{n_u}$ is applied. 
Integration of the approximate mean and covariance dynamics \eqref{eq:musigdot_pwlinearized} over each time interval results in the discretized dynamics
\begin{equation}
   (\mu_{k+1}, \Sigma_{k+1}) = F(\mu_k, \Sigma_k, u_k), \quad k=0,\dots,N-1,
\end{equation}
which here we obtain by applying one step of the explicit Runge-Kutta method of fourth order (RK4) to \eqref{eq:musigdot_pwlinearized}.
Choosing a simplified multiple shooting formulation \cite{Bock1984} the resulting discretized OCP is an NLP of the form
\begin{mini!}
   {  \textstyle \substack{ u_0, \dots, u_{N-1}, \\ \mu_0,\dots,\mu_{N}, \\ \Sigma_0, \dots, \Sigma_N
   }}
   { \sum_{k=0}^{N-1} h l(\mu_k, u_k) + L(\mu_N)}
   {\label{eq:stochOCPdisc}}{}
   \addConstraint{\mu_0}{=\bar \mu_0, \; \Sigma_0 = \bar\Sigma_0}
   \addConstraint{0}{= F(\mu_k, \Sigma_k, u_k) - (\mu_{k+1}, \Sigma_{k+1}),}{k=0,\dots,N-1}
   \addConstraint{0}{\geq g(u_k),}{k=0,\dots,N-1}
   \addConstraint{0}{\geq h^i(\mu_k) + \gamma\sqrt{ \nabla h^i(\mu_k) \Sigma_k \nabla h^i(\mu_k)^\top},\quad \label{eq:stochOCPdisc_constr_x}}{ i=1,\dots,n_h, \; k=1,\dots,N.}
\end{mini!}
While the covariances $\Sigma_k$ are guaranteed to be positive definite at a solution if $\bar \Sigma_0 \succ 0$, they may take arbitrarily indefinite values throughout the solver iterations which can cause problems due to the square root in \eqref{eq:stochOCPdisc_constr_x}.
To avoid this one may use decoupling slack variables, cf. \cite{Messerer2021, Messerer2023}.
In the following examples we formulate the respective OCP of form \eqref{eq:stochOCPdisc} via the Python interface of the symbolic framework CasADi \cite{Andersson2019} and solve them with the interior point method IPOPT \cite{Waechter2006}.

\begin{example}[Quadcopter with wind shadow] \label{ex:quadcopter}
We consider a quadcopter modeled as a mass point and described by state $x=(p_\mathrm{x}, p_\mathrm{y}, v_\mathrm{x}, v_\mathrm{y}) \in\R^4$, i.e., position and velocity in both $\mathrm{x}$ and $\mathrm{y}$ direction. 
The control vector $u=(u_\mathrm{x}, u_\mathrm{y})$ consists of acceleration in the two directions.

We consider two air layers with different wind speeds: 
If $p_\mathrm{y} < 0$ there is no wind due to obstacles blocking its path, and for $p_\mathrm{y} > 0$ there is wind in $p_\mathrm{x}$-direction with $v_\mathrm{w}=-1$.
In both layers we consider air friction in $p_\mathrm{x}$-direction with drag coefficient $d = 0.01$. 
The dynamics are thus 
\begin{equation}
   \dot p_\mathrm{x} = v_\mathrm{x}, \; \dot p_\mathrm{y} = v_\mathrm{y}, \; \dot v_\mathrm{y} = u_\mathrm{y}, \;
\dot v_\mathrm{x}
=
\begin{cases}
u_\mathrm{x} - d \lvert v_\mathrm{x} \rvert v_\mathrm{x} , & p_\mathrm{y} < 0, \\
u_\mathrm{x} - d \lvert v_\mathrm{x} - v_\mathrm{w} \rvert (v_\mathrm{x} - v_\mathrm{w} ), & p_\mathrm{y} > 0.
\end{cases}
\end{equation}
For the considered trajectories we will always have $v_\mathrm{x} > 0$ such that the non-differentiability of the absolute value $\lvert v_\mathrm{x} \rvert$ will not become relevant.
Starting from a fixed initial position the control aim is to maximize the $p_\mathrm{x}$ position in the given time window, $l(x,u) = -p_\mathrm{x} + \epsilon \lVert u \rVert_2^2$, $L(x)=-p_x$, where the controls are slightly regularized with $\epsilon=10^{-5}$.
The quadcopter should stay above the lower bound of $p_\mathrm{y} \geq p_\mathrm{y,min}$ with $p_\mathrm{y,min} = -6$ and in one region the path is blocked by a parabolic obstacle modelled as $p_\mathrm{y} \geq 0.05 (p_\mathrm{x} - 40)^2$.
The controls are constrained as $\lVert u \rVert_\infty \leq 5$.
The initial state is uncertain with $x_0\sim\calN(\bar\mu_0,\bar\Sigma_0)$  where $\bar \mu_0=(0,1,5,0)$ and $\bar \Sigma_0 = \diag(10^{-1},10^{-1},10^{-5},10^{-5})$.
We discretize the time interval into $N=30$ intervals of length $h=0.2$ and solve the corresponding OCP of form \eqref{eq:stochOCPdisc}.
The solution is visualized in Fig.~\ref{fig:quadcopter_traj}.
The quadcopter starts by moving downwards to get into the wind shadow of the obstacle.
Due to the obstacle it has to leave the wind shadow after some time, but does so only as long as necessary.
Since the quadcopter is slowed down more in the upper region this has a rotating effect on the covariance whenever the distribution has nonnegligible support in both layers.

\begin{figure}
\centering
\includegraphics[width=\linewidth]{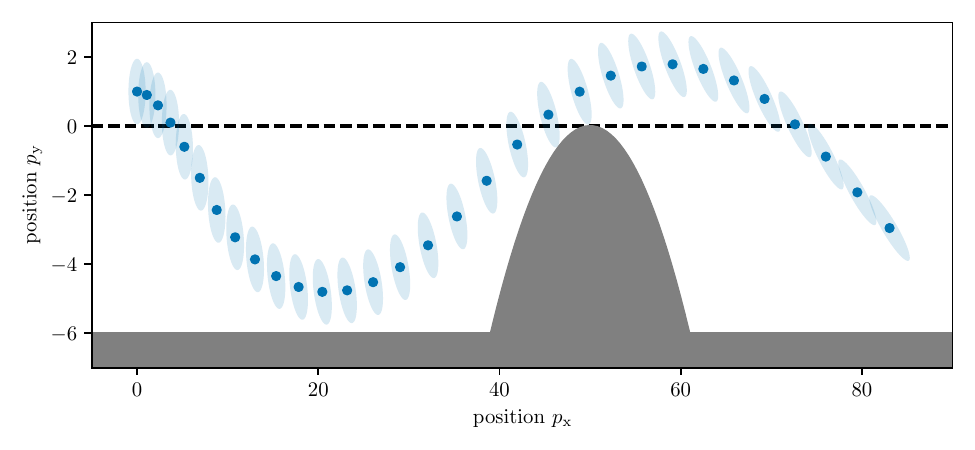}
\caption{Optimal trajectory for the quadcopter from example \ref{ex:quadcopter}.
Shown are the mean and the 99\% confidence region of the normal distribution.
The dashed line indicates the switching surface, the infeasible region is darkly shaded.}
\label{fig:quadcopter_traj}
\end{figure}
\end{example}

\begin{example}[Implicit constraint] \label{ex:black_hole}
We consider a two-dimensional system with state $x=(p_\mathrm{x}, p_\mathrm{y})\in\R^2$.
The controls are $u=(u_\mathrm{x}, u_\mathrm{y})$, constrained by $\lVert u\rVert_\infty \leq u_\mathrm{max}$ with $u_\mathrm{max} = 2$.
In one region of the state space these are directly the velocities of the system, but in the other region there is an additional  vector field, which is in magnitude stronger than the control constraints. 
More specifically we have $f_1(x,u) = u - (4.5, 5) u_\mathrm{max}$, $f_2(x,u)=u$ and $\psi(x) = -p_\mathrm{y} - p_\mathrm{x}^2$.
The control goal is to steer the system to the target position $x_\mathrm{goal}=(6,-2)$ for which we use Huber-like cost terms $l(x, u) = \lVert (x,\varepsilon_{\mathrm{H}})\rVert_2 + \varepsilon_u \lVert u \rVert_2^2$, $L(x) = \lVert (x,\varepsilon_{\mathrm{H}})\rVert_2$, with Huber smoothing $\varepsilon_\mathrm{H} = \sqrt{0.5}$ and slight control regularization $\varepsilon_u = 10^{-5}$.
The initial value is distributed as $x_0\sim\calN(\bar\mu_0,\bar\Sigma_0)$  where $\bar \mu_0=(-6.5, -2)$ and $\bar \Sigma_0 = \tfrac{1}{4} \diag(1,1)$.
We discretize the time interval into $N=15$ intervals of length $h=0.5$ and solve the corresponding OCP of form \eqref{eq:stochOCPdisc}.

The solution is visualized in Fig.~\ref{fig:blackhole-traj} (left), where we additionally sample 50 values from the initial distribution and simulate their trajectories with NOSNOC \cite{Nurkanovic2022b} based on the optimal control trajectory (open loop).
Since in the region with $f_1$ the additional vector field is too strong given the control constraints, $\psi(x)$ acts like an implicit constraint, and the region is mostly avoided.

We compare this to the standard linearization based approach \eqref{eq:musigplus_standard} applied to the smoothed dynamics \eqref{eq:f_smoothed} with smoothing parameter $\sigma=5\cdot10^{-2}$.
Since in these the covariance propagation is based on linearization at the mean, the resulting dynamics only ``see'' the switch if the mean is in its vicinity, independent of the level of uncertainty.
Thus, in the resulting optimal trajectory, cf. Fig~\ref{fig:blackhole-traj} (right), the mean keeps almost no distance from the switching surface, with a significant amount of probability mass overlapping with the second region.
In consequence, when simulating the sample distribution, a large fraction of the samples gets trapped in this region and does not arrive at the target state.

\begin{figure}
   \centering
   \includegraphics[width=\linewidth]{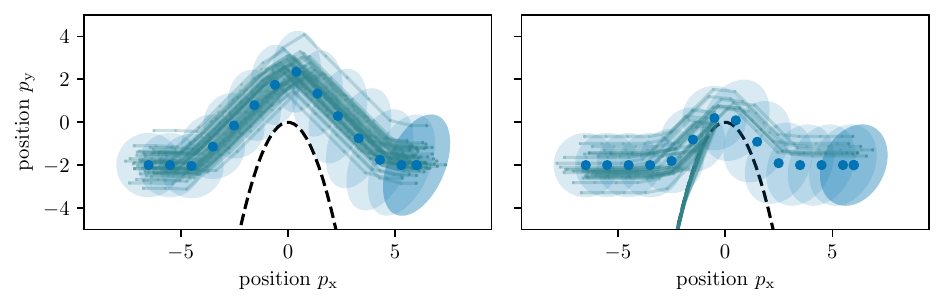}
   \caption{Optimal solutions when using either the approximate dynamics  \eqref{eq:dyn_swi_aff_approx} (left) or the standard linearization based approach  \eqref{eq:musigplus_standard}(right), indicated by blue dots for the mean and a shaded 99\% confidence region.
   The green lines are samples from the initial distribution simulated according to the respective optimal control trajectory (open loop).
   }
   \label{fig:blackhole-traj}
\end{figure}
\end{example}

\section{Conclusions} \label{sec:conclusions}
We derived a method for the approximate propagation of mean and variance through an ODE with discontinuous right-hand side and demonstrated how it can be straightforwardly used in a stochastic OCP formulation.
However, the paper was mostly focused on the derivation of the approximations. 
A formal analysis of the resulting errors would enable a more rigorous theoretical backing of the method.
Further, we only treated the case of two different modes of the right-hand side.
If similar results were derived for the case where the right-hand side has an arbitrary number of modes, the method could be applied in a wider range of situations.
\bibliographystyle{elsarticle-num} 
\bibliography{syscop}

\end{document}